\documentclass[12pt,reqno]{article}

\usepackage{amsmath,amssymb,amsthm}

\usepackage{pictexwd}

\DeclareMathOperator{\Ext}{Ext}
\DeclareMathOperator{\End}{End}
\DeclareMathOperator{\Supp}{Supp}
\DeclareMathOperator{\mo}{mod}
\DeclareMathOperator{\Hom}{Hom}

\newcommand{\seqnum}[1]{\underline{#1}}

\theoremstyle{plain}
\newtheorem{theorem}{Theorem}
\newtheorem{corollary}[theorem]{Corollary}
\newtheorem{lemma}[theorem]{Lemma}
\newtheorem{proposition}[theorem]{Proposition}

\theoremstyle{remark}
\newtheorem{tri}{\bf }[section]
\newtheorem{triS}{\bf S}[section]
\newtheorem*{no-text}{}

\newtheorem{remark}{Remark}

\newcommand{\ssize}[1]{\smallmatrix #1\endsmallmatrix}
\newcommand{\arr}[2]{\arrow <1.5mm> [0.25,0.75] from #1 to #2}

\begin{document}
\title{The Numbers of Support-Tilting Modules\\ for a Dynkin Algebra}
\author{
Mustafa A. A. Obaid\\ \texttt{drmobaid$@$yahoo.com}
\\ \\
S. Khalid Nauman\\ \texttt{snauman$@$kau.edu.sa}
\\ \\
Wafaa M. Fakieh\\ \texttt{wafaa.fakieh$@$hotmail.com}
\\ \\
Claus Michael Ringel\\  \texttt{ringel$@$math.uni-bielefeld.de}
\\ \\
King Abdulaziz University, P O Box 80200\\ Jeddah, Saudi Arabia\\}
\date{}
\maketitle

\begin{abstract} 
The Dynkin algebras are the
hereditary artin algebras of finite representation type. The paper exhibits the
number of support-tilting modules for any Dynkin algebra.
Since the support-tilting modules for a Dynkin algebra of Dynkin type $\Delta$
correspond bijectively to the generalized non-crossing partitions
of type $\Delta$, the calculations presented here may also be considered as a
categorification of results concerning the generalized non-crossing partitions.
In the Dynkin case $\mathbb A$, we obtain the Catalan triangle,
in the cases $\mathbb B$ and $\mathbb C$ the increasing part of the
Pascal triangle, and finally in the case $\mathbb D$ an 
expansion of the increasing part of
the Lucas triangle.

\end{abstract}

\section{Introduction}

Let $\Lambda$ be a  hereditary
artin algebra. Here we consider left $\Lambda$-modules of finite
length and call them just modules. The category of all modules will be denoted by $\mo\Lambda$.
We let $n = n(\Lambda)$ denote the {\it rank} of $\Lambda$; this is by definition
the number of simple modules (always, when counting numbers of modules
of a certain kind, we actually mean the number of isomorphism classes).
Following earlier considerations of Brenner and Butler,
tilting modules were defined in \cite{[HR1]}. In the present setting, a {\it tilting module}
$M$ is a module without self-extensions with precisely $n$ isomorphism classes of
indecomposable direct summands, and we will assume, in addition, that $M$ is multiplicity-free.
The endomorphism ring of a tilting module is said to be
a tilted algebra. There is a wealth of papers devoted to tilted algebras, and the 
{\it Handbook of
Tilting Theory} \cite{[AHK]} can be consulted for references.

The present paper deals with the Dynkin algebras: these are the connected hereditary artin algebras
which are representation-finite, thus their valued quivers are of Dynkin type
$\Delta_n = \mathbb A_n, \mathbb B_n, \dots, \mathbb G_2$ (see \cite{[DR1]}). Its aim is to discuss the number
of tilting modules for such an algebra.
The corresponding tilted algebras were classified
by various authors in the eighties. It seems to be clear that a first step of such a classification
result was the determination of all tilting modules, however there are only few traces in the literature (also the Handbook \cite{[AHK]} is of no help).
Apparently, the relevance of the number of tilting modules 
was seen at that time only in special cases.
The tilting modules for a linearly ordered quiver of type $\mathbb A_n$
were exhibited in \cite{[HR2]} and Gabriel \cite{[G]} pointed out that here
we encounter one of the numerous appearances of the Catalan numbers
$\frac1{n+1}\binom{2n}n$. For the cases $\mathbb D_n$, the number
of tilting modules was determined by Bretscher-L\"aser-Riedtmann \cite{[BLR]} in their
study of self-injective representation-finite algebras.

Given a module $M$, we let $\Lambda(M)$ denote its
{\it support algebra;} this is the factor algebra of $\Lambda$
modulo the ideal which is generated by all idempotents $e$ with $eM = 0$ and is again a hereditary
artin algebra (but usually not connected, even if $\Lambda$ is connected). The rank of the support algebra
of $M$ will be called the {\it support-rank} of $M$. A module $T$ is said to be
{\it support-tilting} provided $M$ considered as a $\Lambda(M)$-module is a tilting module.
It may be well-known that the number of tilting modules of a Dynkin algebra depends only on its
Dynkin type; at least for path algebras of quivers we can refer to Ladkani \cite{[L]}. Section \ref{algebras} 
of the present paper provides a proof in general.
It follows that the number of support-tilting modules 
with support-rank $s$ also depends only on the type $\Delta_n$; we let 
$a_s(\Delta_n)$ denote the number of support-tilting $\Lambda$-modules 
with support-rank $s$, where $\Lambda$ is of type $\Delta_n$.
 Of course, $a_n(\Delta_n)$ is just the number of tilting modules, 
and we denote by $a(\Delta_n)$ the
number of all support-tilting modules; thus $a(\Delta_n) = \sum_{s=0}^n a_s(\Delta_n)$.

The present paper presents the numbers $a(\Delta_n)$ and $a_s(\Delta_n)$ for $0\le s \le n$ in a unified way. 
Of course, the exceptional cases $\mathbb E_6, \mathbb E_7,\mathbb E_8, \mathbb F_4, \mathbb G_2$
can be treated with a computer (but actually, also by hand); thus our main interest lies in the series
$\mathbb A, \mathbb B, \mathbb C, \mathbb D$.
In the case $\mathbb A$, we obtain in this way the {\bf Catalan triangle} \seqnum{A009766},
in the case $\mathbb B$ and $\mathbb C$ the increasing part of the
{\bf Pascal triangle,} and finally in the case $\mathbb D$ an 
{\bf expansion} of the increasing part of
the {\bf Lucas triangle} (see Section \ref{triangles}; an outline will be given later in the introduction).

\subsection{\bf The numbers}
    All the numbers which are presented here for the cases $\mathbb A, \mathbb B, \mathbb C, \mathbb D$ are related to the binomial
    coefficients $\binom st$ and they coincide for $\mathbb B_n$ and $\mathbb C_n$ (as we will show in
Section \ref{algebras}); thus it is sufficient to deal with the cases $\mathbb A, \mathbb B, \mathbb D$.
For $\mathbb B$,
the binomial coefficients themselves will play a dominant role. For the cases $\mathbb A$ and $\mathbb D$,
suitable multiples are relevant. In case $\mathbb A$, 
these are the Catalan numbers $C_n = \frac1{n+1}\binom{2n}n$, as well as related numbers.
For the case $\mathbb D$, it will be convenient to use the notation
$\left[\smallmatrix t\cr s \endsmallmatrix\right] = \frac{s+t}t\binom t s$ as proposed by
Bailey \cite{[B]}, since the
relevant numbers in case $\mathbb D$ can be written in this way. 

Hubery and Krause \cite{[HK]} have pointed out that
the numbers $a(\Delta)$ for the simply laced diagrams $\Delta$ 
were discussed already in 1987 by Gabriel and de la Pe\~na
\cite{[GP]}, but let us quote ``although 
they have the correct number for $\mathbb E_8$, their numbers for $\mathbb E_6$ and $\mathbb E_7$ are
slightly wrong''.
	 
\begin{theorem}	  
The numbers $a(\Delta_n)$ and $a_s(\Delta_n)$ for $0\le s \le n$:
$$
\hbox{\beginpicture
	\setcoordinatesystem units <1.8cm,1.2cm>
	\plot -.3 -.5 7.25 -.5 /
	\plot 0.4 0.3  0.4 -3.4 /
	\put{$\Delta_n$} at 0 0
	\put{$\mathbb A_n$} at 1 0
	\put{$\mathbb B_n, \mathbb C_n$} at 2 0
	\put{$\mathbb D_n$} at 3 0
	\put{$\mathbb E_6$} at 4 0
	\put{$\mathbb E_7$} at 4.8 0
	\put{$\mathbb E_8$} at 5.6 0
	\put{$\mathbb F_4$} at 6.4 0
	\put{$\mathbb G_2$} at 7 0

	\put{$a_n(\Delta_n)$} at 0 -1
	\put{$\frac1{n+1}\binom{2n}n$} at 1 -1
	\put{$\binom{2n-1}{n-1}$} at 2 -1
	\put{$\left[{\smallmatrix 2n-2\cr n-2\endsmallmatrix}\right]$} at 3 -1
	\put{$418$} at 4 -1
	\put{$2\,431$} at 4.8 -1
	\put{$17\,342$} at 5.6 -1
	\put{$66$} at 6.4 -1
	\put{$5$} at 7 -1

	\put{$a_s(\Delta_n)$} at 0 -1.9
	\put{$\smallmatrix  0\le s < n \endsmallmatrix$} at 0 -2.25
	\put{$\frac{n-s+1}{n+1}\binom{n+s}s$} at 1 -2
	\put{$\binom{n+s-1}s$} at 2 -2
	\put{$\left[{\smallmatrix n+s-2\cr s\endsmallmatrix}\right]$} at 3 -2
	\put{\rm see Section \ref{exceptional}} at 5.5 -2
	\setdots <1mm>
	\plot 3.8 -2.05 4.8 -2.05 /
	\plot 6.2 -2.05 7.1 -2.05 /

	\put{$a(\Delta_n)$} at 0 -3
	\put{$\frac1{n+2}\binom{2n+2}{n+1}$} at 1 -3
	\put{$\binom{2n}n$} at 2 -3
	\put{$\left[{\smallmatrix 2n-1\cr n-1\endsmallmatrix}\right]$} at 3 -3
	\put{$833$} at 4 -3
	\put{$4\,160$} at 4.8 -3
	\put{$25\,080$} at 5.6 -3
	\put{$105$} at 6.4 -3
	\put{$8$} at 7 -3

	\endpicture}
	$$
\end{theorem}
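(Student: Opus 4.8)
The plan is to reduce the whole table to two ingredients: the diagonal numbers $a_n(\Delta_n)$, i.e. the number of genuine tilting modules of each Dynkin type, and a purely combinatorial sum over subdiagrams. By definition a support-tilting module of support-rank $s$ is exactly a tilting module over a support algebra of rank $s$, and such a support algebra is the full subquiver $\Delta'$ on some $s$ of the $n$ vertices. Since the number of tilting modules depends only on the Dynkin type (Section \ref{algebras}), and since a tilting module over a disconnected hereditary algebra is the direct sum of tilting modules over its connected blocks, I would first record $a_s(\Delta_n)=\sum_{\Delta'}t(\Delta')$, where $\Delta'$ runs over the full subdiagrams on $s$ vertices and $t(\Delta')$ is the product, over the connected components of $\Delta'$, of their tilting numbers. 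Summing over $s$ gives $a(\Delta_n)=\sum_{\Delta'}t(\Delta')$ over all full subdiagrams (including the empty one, with $t=1$). Everything then rests on (i) the diagonal values $t(\Delta_n)=a_n(\Delta_n)$ and (ii) the enumeration of subdiagrams.

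For (i) I would compute $a_n(\Delta_n)$ by a recursion on the rank; the exceptional types $\mathbb E_6,\mathbb E_7,\mathbb E_8,\mathbb F_4,\mathbb G_2$ reduce to finite checks. For the four series I would single out a distinguished vertex --- a leaf of $\mathbb A_n$, or the marked end-vertex of $\mathbb B_n,\mathbb C_n,\mathbb D_n$ --- and split the tilting modules according to which indecomposable involving that vertex occurs as a direct summand. This yields a recursion whose solution is, in turn, the Catalan number $\tfrac1{n+1}\binom{2n}n$, the number $\binom{2n-1}{n-1}$, and the bracket number $\left[{\smallmatrix 2n-2\cr n-2\endsmallmatrix}\right]$; the coincidence $t(\mathbb B_n)=t(\mathbb C_n)$ is the type-invariance already established in Section \ref{algebras}.

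For (ii) I would carry out the subdiagram sums family by family and identify them with the triangles of Section \ref{triangles}. Deleting vertices from the path $\mathbb A_n$ leaves a disjoint union of shorter $\mathbb A$'s, so $a_s(\mathbb A_n)$ is a convolution of Catalan numbers; this reassembles into the ballot numbers $\tfrac{n-s+1}{n+1}\binom{n+s}s$, the Catalan triangle \seqnum{A009766}, which I would confirm either from that triangle's additive recursion or by a direct Vandermonde-type identity. For $\mathbb B_n,\mathbb C_n$ the subdiagrams are disjoint unions of $\mathbb A$'s together with at most one $\mathbb B$/$\mathbb C$ component at the marked end, and the sum collapses to $\binom{n+s-1}s$, the increasing part of Pascal's triangle, with total $\binom{2n}n$. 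For $\mathbb D_n$ the branch vertex yields a richer family (disjoint unions of $\mathbb A$'s with at most one $\mathbb D$ component), and the sums produce the bracket numbers $\left[{\smallmatrix n+s-2\cr s\endsmallmatrix}\right]$, the advertised expansion of the Lucas triangle, telescoping over $s$ to $\left[{\smallmatrix 2n-1\cr n-1\endsmallmatrix}\right]$.

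I expect the $\mathbb D_n$ case to be the main obstacle. Because of the trivalent node, the enumeration of full subdiagrams is genuinely more intricate than in the linear cases, and the relevant quantities are no longer plain binomial coefficients but Bailey's numbers $\tfrac{s+t}t\binom ts$; one must therefore set up the Lucas-triangle recursion carefully and control the low-rank corrections together with the degenerate identifications $\mathbb D_2,\mathbb D_3$ so that the uniform formula remains valid. Verifying the diagonal $a_n(\mathbb D_n)=\left[{\smallmatrix 2n-2\cr n-2\endsmallmatrix}\right]$ and reconciling it with the subdiagram sum is where I would expect to spend most effort; by contrast, the $\mathbb A$ and $\mathbb B$/$\mathbb C$ cases come down to standard Catalan and Pascal identities.
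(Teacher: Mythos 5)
Your reduction of the table to (i) the diagonal tilting numbers and (ii) a sum over full subdiagrams is sound: a support-tilting module with support $S$ is the same thing as a tilting module over the restriction of $\Lambda$ to $S$ (tilting modules over hereditary artin algebras are faithful, hence sincere over their support algebra), the count is multiplicative over connected components, and the type-invariance you invoke is exactly what Section~\ref{algebras} provides. This organization is genuinely different from the paper's, which instead runs the induction through the hook formula of Proposition~\ref{hook-formula} (a bijection obtained by splitting off a maximal summand through vertex $1$) and its modified version (Proposition~\ref{modified}), using the subdiagram convolution only locally, e.g.\ the sums $\sum_i a_{i-1}(\mathbb A_{i-1})a_{n-i}(\mathbb B_{n-i})$ in Section~\ref{tilting}. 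Either organization can in principle be completed, and in both of them everything rests on the diagonal values.

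That is where your proposal has a genuine gap. For the diagonal you only assert that splitting tilting modules ``according to which indecomposable involving the distinguished vertex occurs as a direct summand'' yields a recursion ``whose solution is'' $\frac1{n+1}\binom{2n}{n}$, $\binom{2n-1}{n-1}$ and $\left[\begin{smallmatrix} 2n-2\\ n-2\end{smallmatrix}\right]$. For linearly oriented $\mathbb A_n$ this works because every tilting module must contain the projective-injective indecomposable, but for $\mathbb B_n$ (with $n\ge 2$) no indecomposable projective is injective, so no summand is forced; worse, the complements of a fixed indecomposable summand $X$ of a tilting module are constrained only by $\Ext$-vanishing, not by $\Hom$-orthogonality, so they do not form the module category of a smaller hereditary algebra, and your splitting does not reduce to smaller types in any direct way. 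This is precisely why the paper's Section~\ref{tilting} --- its main new computation --- passes from tilting modules to antichains (via the Ingalls--Thomas correspondence \cite{[IT]}, \cite{[ONFR]}), splits the sincere antichains into those with and without a sincere element ($u(\mathbb B_n)+v(\mathbb B_n)$), and needs wings and thick subcategories to identify each part with a convolution; and why for $\mathbb D_n$ the paper does not prove the diagonal value at all but quotes Bretscher--L\"aser--Riedtmann \cite{[BLR]}. Without an actual argument for $a_n(\mathbb B_n)$ and $a_n(\mathbb D_n)$, your proof of the table asserts its hardest ingredient. Secondarily, the closed-form evaluation of your subdiagram sums (the Catalan convolutions giving the ballot numbers, and their $\mathbb B$ and $\mathbb D$ analogues) is also left as a claim; these identities are provable (by generating functions, or by checking that the subdiagram sums satisfy the hook recursion with the right boundary values), but as written they, too, are gaps.
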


\begin{remark} By analogy with the Bailey notation
$\left[\smallmatrix t\cr s\endsmallmatrix\right]$ one may be tempted to
introduce the following notation for the Catalan triangle:
$\left]\smallmatrix t\cr s\endsmallmatrix\right[ = \frac{t-2s+1}{t-s+1}\binom t s$.
Then the numbers for the case $\mathbb A$ are written as follows:
$$a_n(\mathbb A_n) =  \left]\smallmatrix 2n\cr n\endsmallmatrix\right[,\
a_s(\mathbb A_n) = \left]\smallmatrix n+s\cr s\endsmallmatrix\right[,\
a(\mathbb A) = \left]\smallmatrix 2n+2\cr n+1\endsmallmatrix\right[.$$
\end{remark}

\begin{remark}  The reader should observe that for 
 $\mathbb A_n$ and  
$\mathbb B_n$, the formula given for $a_s(\Delta_n)$ and $0\le s < n$
 works also for $s = n$. This is not the case for $\mathbb D_n$:
 whereas $\binom{2n-2}{n-2} = \binom{2n-2}n$, the numbers
  $\left[{\smallmatrix 2n-2\cr n-2\endsmallmatrix}\right]$ and $\left[{\smallmatrix 2n-2\cr n\endsmallmatrix}\right]$
  are different (the difference will be highlighted at the end of Section \ref{triangles}).
The Lucas triangle consists of the
numbers $\left[{\smallmatrix t\cr s\endsmallmatrix}\right]$ for all $0 \le s \le t$;
it therefore uses the
numbers $\left[{\smallmatrix 2n-2\cr n\endsmallmatrix}\right]$ at the positions, whereas the $\mathbb D$-triangle
(which we will now consider)
uses the numbers $\left[{\smallmatrix 2n-2\cr n-2\endsmallmatrix}\right]$.
\end{remark}

\subsection
{The triangles $\mathbb A, \mathbb B, \mathbb D$} 
The non-zero numbers $a_s(\Delta_n)$ for $\Delta = \mathbb A, \mathbb B, \mathbb D$ 
yield three triangles having similar properties. We will exhibit them 
in Section \ref{triangles}; see the triangles 
\ref{triangle1}, \ref{triangle2}, \ref{triangle3}.
The triangle \ref{triangle1} of type $\mathbb A$ is the Catalan triangle itself;
this is \seqnum{A009766} in Sloane's OEIS \cite{[S]}. The triangle \ref{triangle2} of type 
$\mathbb B$ is the triangle \seqnum{A059481}, 
corresponding to the increasing part of the Pascal triangle
(thus it consists of the binomial coefficients $\binom ts$ with $2s\le t+1$).
The triangle \ref{triangle3} of type $\mathbb D$ is
an expansion of the increasing part of
the Lucas triangle  \seqnum{A029635}.
Taking the increasing part of the rows
in the Lucas triangle (thus the numbers $\left[\smallmatrix t\cr s \endsmallmatrix\right]$ with
$2s\le t+1$), we obtain numbers which occur in the triangle of type $\mathbb D$,
namely the numbers $a_s(\mathbb D_n)$
with $0 \le s < n$; the numbers $a_n(\mathbb D_n)$ on the diagonal however 
are given by a similar, but deviating formula
(they are listed as the sequence \seqnum{A129869}).
The Lucas triangle is \seqnum{A029635}, but the triangle $\mathbb D$ itself
was, at the time of the writing, not yet recorded in OEIS; now it is \seqnum{A241188}.
 
We see that the entries $a_s(n)$ of the triangles $\mathbb A$ and $\mathbb B$, as well as those of
the lower triangular part of the triangle $\mathbb D$ can be obtained in a unified way
from three triangles with entries $z_s(t)$ which satisfy the following recursion formula
$$
  z_s(t) = z_{s-1}(t-1)+z_s(t-1)
  $$
(they are exhibited in Section \ref{triangles} as triangles S~\ref{triangle1-S},
S~\ref{triangle2-S}, S~\ref{triangle3-S} using the shearing $a_s(n) = z_s(n+s-1)$). 
The recursion formula can be rewritten as $z_s(t) = \sum_{i= 0}^{s}z_i(t-s+i+1)$
(sometimes called the hockey stick formula). A consequence of the hockey stick formula is the fact
that summing up the rows of any of the three triangles $\mathbb A, \mathbb B, \mathbb D$,
we again obtain numbers
which appear in the triangle. 

Let us provide further details on the triangles to be sheared.
Consider first the case $\mathbb B$. Here we start with the Pascal triangle; 
thus we deal with the triangle with numbers $z_s(t) = \binom ts$ and the
initial conditions are $z_0(t) = z_t(t) = 1$ for all $t\ge 0$.
In case $\mathbb D$, we start with the Lucas triangle with numbers
$z_s(t) = \left[\smallmatrix t\cr s \endsmallmatrix\right]$, and
the initial conditions are
$z_0(t) = 1,\ z_t(t) = 2$
for all $t\ge 1$ 
(these initial conditions are the reason for calling the Lucas triangle also the
$(1,2)$-triangle). 
In the case $\mathbb A$ we start with a sheared Catalan triangle, and here 
the initial conditions are $z_0(t) = 1$
and $z_{t+1}(2t) = 0$ for  all $t\ge 0$.
		
\subsection{Related results}
Let us repeat that in this paper
$a_n(\Delta_n)$ denotes the number of tilting modules, $a(\Delta_n)$ the
number of support-tilting modules, for $\Lambda$ of Dynkin type $\Delta_n$.
As we have mentioned, the relevance of the numbers 
$a_n(\Delta_n)$ and $a(\Delta_n)$ was not fully realized in the eighties. 
It became apparent through the work of Fomin and
Zelevinksy when dealing with cluster algebras and the corresponding cluster complexes 
(see in particular \cite{[FZ]} and \cite{[FR]}):
the numbers $a_n(\Delta_n)$ and $a(\Delta_n)$ appear in \cite{[FZ]} 
as the numbers $N(\Delta_n)$ of clusters and $N^+(\Delta_n)$ of positive clusters, respectively
(see Propositions 3.8 and 3.9 of  \cite{[FZ]}). For the numbers $a_s(\Delta_n)$ in general, see
Chapoton \cite{[C]} in case $\mathbb A$ and $\mathbb B$ and Krattentaler \cite{[Kt]} in case
$\mathbb D$. A conceptual proof of the equalities $a(\Delta_n) = N(\Delta_n)$ 
and $a_n(\Delta_n) = N^+(\Delta_n)$ has been given
by Ingalls and Thomas \cite{[IT]} in case $\Delta_n$ is simply laced (thus of type $\mathbb A, \mathbb D$ of $\mathbb E$). 
The considerations of Ingalls and Thomas have been extended by the authors \cite{[ONFR]}
to the non-simply laced cases.
The papers \cite{[IT]} and \cite{[ONFR]}
show in which way the representation theory of hereditary artin algebras can be used in order
to categorify the cluster complex of Fomin and Zelevinsky: this is the reason for the equalities. 
Another method to relate clusters and support tilting modules is due to Marsh, Reineke and
Zelevinsky \cite{[MRZ]}. Finally, let us stress that also the Coxeter diagrams 
$\mathbb H_3$ and $\mathbb H_4$ can be treated in a similar way, using
hereditary artinian rings which are not artin algebras; this will be shown in \cite{[FR]}.

The main result
of the present paper is the direct calculation of the numbers 
$a_n(\Delta_n)$ in the case $\Delta = \mathbb B$;
see Section \ref{tilting}. Of course, using
\cite{[ONFR]}, this calculation can be replaced by referring to the 
determination of the corresponding cluster numbers by Fomin and Zelevinsky in \cite{[FZ]}.
On the other hand, we hope that our proof is of interest in itself. 

There is an independent development which has to be mentioned, 
namely the theory of generalized non-crossing
partitions (see for example \cite{[A]}). 
It is the Ingalls-Thomas paper \cite{[IT]} (and \cite{[ONFR]}; see also
the survey \cite{[R2]}) which provides the basic setting for using the representation theory
of a hereditary artin algebra $\Lambda$ in order to deal with non-crossing partitions.
It turns out that there is a large number of counting problems for $\mo\Lambda$
which yield the same answer, namely the numbers $a(\Delta_n)$ and $a_s(\Delta_n)$. 
For example, $a_s(\Delta)$ is also the number of antichains in $\mo\Lambda$ of size $s$:
an {\it antichain} $A = \{A_1,\dots,A_t\}$ in $\mo\Lambda$ is a set of pairwise
orthogonal bricks (a brick is a module whose endomorphism ring is a division ring, 
and two bricks $A_1, A_2$ are said to be orthogonal provided $\Hom(A_1,A_2) = 0 = \Hom(A_2,A_1)$; antichains are called discrete subsets in \cite{[GP]} and $\Hom$-free subsets in \cite{[HK]}). 

Since the support-tilting modules for a Dynkin algebra of Dynkin type $\Delta$
correspond bijectively to the non-crossing partitions
of type $\Delta$, the calculations presented here may be considered as a
categorification of results concerning non-crossing partitions (for a general outline
see Hubery-Krause \cite{[HK]}). Finally, let us mention that there is a corresponding 
discussion of the number of
ad-nilpotent ideals of a Borel subalgebra of a simple Lie algebra;
see Panyushev \cite{[P]}.
	
\subsection
{\bf Outline of the paper} 
Let us stress again that there is an inductive procedure using
the hook formula (Proposition \ref{hook-formula}) and a modified hook formula
(Proposition \ref{modified})
in order to obtain the numbers $a_s(\mathbb A_n)$ for $0\le s \le n$,
as well as the numbers
$a_s(\Delta_n)$ for $\Delta = \mathbb B, \mathbb D$ for $0 \le s < n$, 
provided we know the numbers
$a_n(\Delta_n)$. As we have mentioned,
for the numbers $a_n(\mathbb D_n)$ we may refer to \cite{[BLR]}. 
In Section \ref{algebras}, we will show that
the numbers $a_n(\mathbb B_n)$ and $a_n(\mathbb C_n)$ coincide;
thus it remains to determine the numbers $a_n(\mathbb B_n)$.
This will be done in Section \ref{tilting}. In Section \ref{summation}, we calculate $a(\Delta_n)$ for
$\Delta = \mathbb A, \mathbb B, \mathbb D$. 

Section \ref{triangles} presents the triangles $\mathbb A, \mathbb B, \mathbb D$
as well as the corresponding Catalan, Pascal, and Lucas triangles, and some observations
concerning repetition of numbers in the triangles are recorded. Section \ref{exceptional}
provides the numbers $a_s(\Delta_n)$ for the exceptional cases 
$\Delta_n = \mathbb E_6,  \mathbb E_7,  \mathbb E_8,  \mathbb F_4,  \mathbb G_2$.

\section{The triangles}\label{triangles}

\begin{tri}\label{triangle1}{\bf The  triangle of type $\mathbb A$; this is \seqnum{A009766}}
$$
\hbox{\beginpicture
	\setcoordinatesystem units <1cm,.45cm>
	\put{$a_s(\mathbb A_n) = \dfrac{n-s+1}{n+1}\dbinom{n+s}s$} at 7 -1.5
	\multiput{1} at 0 0  0 -1  0 -2  0 -3  0 -4  0 -5  0 -6  0 -7  0 -8  0 -9 /
	\put{1} at 1 -1
	\put{2} at 1 -2
	\put{2} at 2 -2
	\put{3} at 1 -3
	\put{5} at 2 -3
	\put{5} at 3 -3
	\put{4} at 1 -4
	\put{9} at 2 -4
	\put{14} at 3 -4
	\put{14} at 4 -4
	\put{5} at 1 -5
	\put{14} at 2 -5
	\put{28} at 3 -5
	\put{42} at 4 -5
	\put{42} at 5 -5
	\put{6} at 1 -6
	\put{20} at 2 -6
	\put{48} at 3 -6
	\put{90} at 4 -6
	\put{132} at 5 -6
	\put{132} at 6 -6
	\put{7} at 1 -7
	\put{27} at 2 -7
	\put{75} at 3 -7
	\put{165} at 4 -7
	\put{297} at 5 -7
	\put{429} at 6 -7
	\put{429} at 7 -7
	\put{8} at 1 -8
	\put{35} at 2 -8
	\put{110} at 3 -8
	\put{275} at 4 -8
	\put{572} at 5 -8
	\put{1001} at 6 -8
	\put{1430} at 7 -8
	\put{1430} at 8 -8
	\put{9} at 1 -9
	\put{44} at 2 -9
	\put{154} at 3 -9
	\put{429} at 4 -9
	\put{1001} at 5 -9
	\put{2002} at 6 -9
	\put{3432} at 7 -9
	\put{4862} at 8 -9
	\put{4862} at 9 -9

	\put{$\ssize n$} at -2 1
	\put{$\ssize0$} at -2 0
	\put{$\ssize1$} at -2 -1
	\put{$\ssize2$} at -2 -2
	\put{$\ssize3$} at -2 -3
	\put{$\ssize4$} at -2 -4
	\put{$\ssize5$} at -2 -5
	\put{$\ssize6$} at -2 -6
	\put{$\ssize7$} at -2 -7
	\put{$\ssize8$} at -2 -8
	\put{$\ssize9$} at -2 -9

	\plot -2 1.5  -1.5 1 /
	\put{$\ssize s$} at -1.5 1.5
	\put{$\ssize0$} at 0 1.5
	\put{$\ssize1$} at 1 1.5
	\put{$\ssize2$} at 2 1.5
	\put{$\ssize3$} at 3 1.5
	\put{$\ssize4$} at 4 1.5
	\put{$\ssize5$} at 5 1.5
	\put{$\ssize6$} at 6 1.6
	\put{$\ssize7$} at 7 1.5
	\put{$\ssize8$} at 8 1.5
	\put{$\ssize9$} at 9 1.5

	\put{} at 0 -10
	\put{sum} [r]  at 11.5 1.5
	\put{1} [r] at 11.5 0
	\put{2}  [r] at 11.5 -1
	\put{5}  [r] at 11.5 -2
	\put{14}  [r] at 11.5 -3
	\put{42}  [r] at 11.5 -4
	\put{132}  [r] at 11.5 -5
	\put{429}  [r] at 11.5 -6
	\put{1430}   [r] at 11.5 -7
	\put{4862}  [r] at 11.5 -8
	\put{16796}  [r] at 11.5 -9
	\endpicture}
$$
\end{tri}

\begin{tri}\label{triangle2}{\bf The  triangle of type $\mathbb B$; this is \seqnum{A059481}}
$$
\hbox{\beginpicture
	\setcoordinatesystem units <1cm,.45cm>
	\put{$a_s(\mathbb B_n) = \dbinom{n+s-1}s$} at 7 -1.5
	\multiput{1} at   0 -1  0 -2  0 -3  0 -4  0 -5  0 -6  0 -7  0 -8  0 -9 /
	\put{1} at 0 0
	\put{1} at 11.4 0

	\put{1} at 1 -1
	\put{2} at 1 -2
	\put{3} at 2 -2
	\put{3} at 1 -3
	\put{6} at 2 -3
	\put{10} at 3 -3
	\put{4} at 1 -4
	\put{10} at 2 -4
	\put{20} at 3 -4
	\put{35} at 4 -4
	\put{5} at 1 -5
	\put{15} at 2 -5
	\put{35} at 3 -5
	\put{70} at 4 -5
	\put{126} at 5 -5
	\put{6} at 1 -6
	\put{21} at 2 -6
	\put{56} at 3 -6
	\put{126} at 4 -6
	\put{252} at 5 -6
	\put{462} at 6 -6
	\put{7} at 1 -7
	\put{28} at 2 -7
	\put{84} at 3 -7
	\put{210} at 4 -7
	\put{462} at 5 -7
	\put{924} at 6 -7
	\put{1716} at 7 -7
	\put{8} at 1 -8
	\put{36} at 2 -8
	\put{120} at 3 -8
	\put{330} at 4 -8
	\put{792} at 5 -8
	\put{1716} at 6 -8
	\put{3432} at 7 -8
	\put{6435} at 8 -8
	\put{9} at 1 -9
	\put{45} at 2 -9
	\put{165} at 3 -9
	\put{495} at 4 -9
	\put{1287} at 5 -9
	\put{3003} at 6 -9
	\put{6435} at 7 -9
	\put{12870} at 8 -9
	\put{24310} at 9 -9

	\put{$\ssize n$} at -2 1
	\put{$\ssize0$} at -2 0
	\put{$\ssize1$} at -2 -1
	\put{$\ssize2$} at -2 -2
	\put{$\ssize3$} at -2 -3
	\put{$\ssize4$} at -2 -4
	\put{$\ssize5$} at -2 -5
	\put{$\ssize6$} at -2 -6
	\put{$\ssize7$} at -2 -7
	\put{$\ssize8$} at -2 -8
	\put{$\ssize9$} at -2 -9

	\plot -2 1.5  -1.5 1 /
	\put{$\ssize s$} at -1.5 1.5
	\put{$\ssize0$} at 0 1.5
	\put{$\ssize1$} at 1 1.5
	\put{$\ssize2$} at 2 1.5
	\put{$\ssize3$} at 3 1.5
	\put{$\ssize4$} at 4 1.5
	\put{$\ssize5$} at 5 1.5
	\put{$\ssize6$} at 6 1.6
	\put{$\ssize7$} at 7 1.5
	\put{$\ssize8$} at 8 1.5
	\put{$\ssize9$} at 9 1.5

	\put{} at 0 -10
	\put{sum} [r]  at 11.5 1.5
	\put{} [r] at 11.5 0
	\put{2}  [r] at 11.5 -1
	\put{6}  [r] at 11.5 -2
	\put{20}  [r] at 11.5 -3
	\put{70}  [r] at 11.5 -4
	\put{252}  [r] at 11.5 -5
	\put{924}  [r] at 11.5 -6
	\put{3432}   [r] at 11.5 -7
	\put{12870}  [r] at 11.5 -8
	\put{48620}  [r] at 11.5 -9
	\endpicture}
	$$
\end{tri}

\begin{tri}\label{triangle3}{\bf The  triangle  of type $\mathbb D$; this is now 
\seqnum{A241188}}
$$
\hbox{\beginpicture
	\setcoordinatesystem units <1cm,.45cm>
	\put{$a_s(\mathbb D_n) \,= \left\{\begin{matrix} \cr\cr\cr\end{matrix}\right.$} [l] at 4.2 -1.4
	
\put{$\left[ \smallmatrix{n+s-2}\cr s \endsmallmatrix\right]$ \ for $0\le s <n;
      $} [l] at 6.4 -.3
      \put{$ \left[ \smallmatrix{2n-2}\cr n-2 \endsmallmatrix\right]$\qquad for $s=n$.} [l] at 6.4 -2.5

\multiput{1} at  0 -2  0 -3  0 -4  0 -5  0 -6  0 -7  0 -8  0 -9 /
\multiput{$\cdot$} at 0 0  0 -1  1 -1  /
\multiput{$\cdot$} at 11.4 0  11.4 -1  /
\put{} at 1 -1
\put{2} at 1 -2
\put{1} at 2 -2
\put{3} at 1 -3
\put{5} at 2 -3
\put{5} at 3 -3
\put{4} at 1 -4
\put{9} at 2 -4
\put{16} at 3 -4
\put{20} at 4 -4
\put{5} at 1 -5
\put{14} at 2 -5
\put{30} at 3 -5
\put{55} at 4 -5
\put{77} at 5 -5
\put{6} at 1 -6
\put{20} at 2 -6
\put{50} at 3 -6
\put{105} at 4 -6
\put{196} at 5 -6
\put{294} at 6 -6
\put{7} at 1 -7
\put{27} at 2 -7
\put{77} at 3 -7
\put{182} at 4 -7
\put{378} at 5 -7
\put{714} at 6 -7
\put{1122} at 7 -7
\put{8} at 1 -8
\put{35} at 2 -8
\put{112} at 3 -8
\put{294} at 4 -8
\put{672} at 5 -8
\put{1386} at 6 -8
\put{2640} at 7 -8
\put{4290} at 8 -8
\put{9} at 1 -9
\put{44} at 2 -9
\put{156} at 3 -9
\put{450} at 4 -9
\put{1122} at 5 -9
\put{2508} at 6 -9
\put{5148} at 7 -9
\put{9867} at 8 -9
\put{16445} at 9 -9

\put{$\ssize n$} at -2 1
\put{$\ssize0$} at -2 0
\put{$\ssize1$} at -2 -1
\put{$\ssize2$} at -2 -2
\put{$\ssize3$} at -2 -3
\put{$\ssize4$} at -2 -4
\put{$\ssize5$} at -2 -5
\put{$\ssize6$} at -2 -6
\put{$\ssize7$} at -2 -7
\put{$\ssize8$} at -2 -8
\put{$\ssize9$} at -2 -9

\plot -2 1.5  -1.5 1 /
\put{$\ssize s$} at -1.5 1.5
\put{$\ssize0$} at 0 1.5
\put{$\ssize1$} at 1 1.5
\put{$\ssize2$} at 2 1.5
\put{$\ssize3$} at 3 1.5
\put{$\ssize4$} at 4 1.5
\put{$\ssize5$} at 5 1.5
\put{$\ssize6$} at 6 1.6
\put{$\ssize7$} at 7 1.5
\put{$\ssize8$} at 8 1.5
\put{$\ssize9$} at 9 1.5

\put{} at 0 -10
\put{sum} [r]  at 11.5 1.5
\put{} [r] at 11.5 0
\put{}  [r] at 11.5 -1
\put{4}  [r] at 11.5 -2
\put{14}  [r] at 11.5 -3
\put{50}  [r] at 11.5 -4
\put{182}  [r] at 11.5 -5
\put{672}  [r] at 11.5 -6
\put{2508}   [r] at 11.5 -7
\put{9438}  [r] at 11.5 -8
\put{35750}  [r] at 11.5 -9

\setdashes <1mm>
\plot 1.5 -1.5  1.5 -2.5
      2.5 -2.5  2.5 -3.5
            3.5 -3.5  3.5 -4.5
	          4.5 -4.5  4.5 -5.5
		        5.5 -5.5  5.5 -6.5
			      6.5 -6.5  6.5 -7.5
			            7.5 -7.5  7.5 -8.5
				          8.5 -8.5  8.5 -9.5
					        9.5 -9.5  9.5 -10
						/
	\endpicture}
$$
\end{tri}

\begin{triS}\label{triangle1-S}{\bf The sheared Catalan triangle \seqnum{A008315}}
$$
\hbox{\beginpicture
	\setcoordinatesystem units <.9cm,.45cm>
	\put{$\dbinom t s- \dbinom t {s-1} = \dfrac{t-2s+1}{t-s-1}\dbinom ts$} at 7  -2
	\multiput{1} at  0 0  0 -1  0 -2  0 -3  0 -4  0 -5  0 -6  0 -7  0 -8  0 -9 /
	\put{1} at 1 -1
	\put{2} at 1 -2
	\put{} at 2 -2
	\put{3} at 1 -3
	\put{2} at 2 -3
	\put{} at 3 -3
	\put{4} at 1 -4
	\put{5} at 2 -4
	\put{} at 3 -4
	\put{} at 4 -4
	\put{5} at 1 -5
	\put{9} at 2 -5
	\put{5} at 3 -5
	\put{} at 4 -5
	\put{} at 5 -5
	\put{6} at 1 -6
	\put{14} at 2 -6
	\put{14} at 3 -6
	\put{} at 4 -6
	\put{} at 5 -6
	\put{} at 6 -6
	\put{7} at 1 -7
	\put{20} at 2 -7
	\put{28} at 3 -7
	\put{14} at 4 -7
	\put{} at 5 -7
	\put{} at 6 -7
	\put{} at 7 -7
	\put{8} at 1 -8
	\put{27} at 2 -8
	\put{48} at 3 -8
	\put{42} at 4 -8
	\put{} at 5 -8
	\put{} at 6 -8
	\put{} at 7 -8
	\put{} at 8 -8
	\put{9} at 1 -9
	\put{35} at 2 -9
	\put{75} at 3 -9
	\put{90} at 4 -9
	\put{42} at 5 -9
	\put{} at 6 -9
	\put{} at 7 -9
	\put{} at 8 -9
	\put{} at 9 -9

	\put{$\ssize t$} at -2 1
	\put{$\ssize0$} at -2 0
	\put{$\ssize1$} at -2 -1
	\put{$\ssize2$} at -2 -2
	\put{$\ssize3$} at -2 -3
	\put{$\ssize4$} at -2 -4
	\put{$\ssize5$} at -2 -5
	\put{$\ssize6$} at -2 -6
	\put{$\ssize7$} at -2 -7
	\put{$\ssize8$} at -2 -8
	\put{$\ssize9$} at -2 -9

	\plot -2 1.5  -1.5 1 /
	\put{$\ssize s$} at -1.5 1.5
	\put{$\ssize0$} at 0 1.5
	\put{$\ssize1$} at 1 1.5
	\put{$\ssize2$} at 2 1.5
	\put{$\ssize3$} at 3 1.5
	\put{$\ssize4$} at 4 1.5
	\put{$\ssize5$} at 5 1.5
	\put{$\ssize6$} at 6 1.6
	\put{$\ssize7$} at 7 1.5
	\put{$\ssize8$} at 8 1.5
	\put{$\ssize9$} at 9 1.5
	\plot 5.5 -10  5.5 -8.5  4.5 -8.5  4.5 -6.5  3.5 -6.5 3.5 -4.5  2.5 -4.5 2.5 -2.5  1.5 -2.5
	   1.5 -0.5  0.5 -0.5  0.5 .5 /
	   \setdots <.5mm>
	   \plot  0 0   1 -1 /
	   \plot  0 -1  2 -3 /
	   \plot  0 -2  3 -5 /
	   \plot  0 -3  4 -7 /
	   \plot  0 -4  5 -9 /
	   \plot  0 -5  5 -10 /
	   \plot  0 -6  4 -10 /
	   \plot  0 -7  3 -10 /
	   \plot  0 -8  2 -10 /
	   \plot  0 -9  1 -10 /

	   \put{} at 0 -10
	   \endpicture}
	   $$
\end{triS}
	   
\begin{triS}\label{triangle2-S}{\bf The  
Pascal triangle \seqnum{A007318}, left of the staircase line is the increasing part}
$$
   \hbox{\beginpicture
		\setcoordinatesystem units <.9cm,.45cm>
		\put{$\dbinom t s$} at 7  -2
		\multiput{1} at 0 0  0 -1  0 -2  0 -3  0 -4  0 -5  0 -6  0 -7  0 -8  0 -9 /
		\put{1} at 1 -1
		\put{2} at 1 -2
		\put{1} at 2 -2
		\put{3} at 1 -3
		\put{3} at 2 -3
		\put{1} at 3 -3
		\put{4} at 1 -4
		\put{6} at 2 -4
		\put{4} at 3 -4
		\put{1} at 4 -4
		\put{5} at 1 -5
		\put{10} at 2 -5
		\put{10} at 3 -5
		\put{5} at 4 -5
		\put{1} at 5 -5
		\put{6} at 1 -6
		\put{15} at 2 -6
		\put{20} at 3 -6
		\put{15} at 4 -6
		\put{6} at 5 -6
		\put{1} at 6 -6
		\put{7} at 1 -7
		\put{21} at 2 -7
		\put{35} at 3 -7
		\put{35} at 4 -7
		\put{21} at 5 -7
		\put{7} at 6 -7
		\put{1} at 7 -7
		\put{8} at 1 -8
		\put{28} at 2 -8
		\put{56} at 3 -8
		\put{70} at 4 -8
		\put{56} at 5 -8
		\put{28} at 6 -8
		\put{8} at 7 -8
		\put{1} at 8 -8
		\put{9} at 1 -9
		\put{36} at 2 -9
		\put{84} at 3 -9
		\put{126} at 4 -9
		\put{126} at 5 -9
		\put{84} at 6 -9
		\put{36} at 7 -9
		\put{9} at 8 -9
		\put{1} at 9 -9

		\put{$\ssize t$} at -2 1
		\put{$\ssize0$} at -2 0
		\put{$\ssize1$} at -2 -1
		\put{$\ssize2$} at -2 -2
		\put{$\ssize3$} at -2 -3
		\put{$\ssize4$} at -2 -4
		\put{$\ssize5$} at -2 -5
		\put{$\ssize6$} at -2 -6
		\put{$\ssize7$} at -2 -7
		\put{$\ssize8$} at -2 -8
		\put{$\ssize9$} at -2 -9

		\plot -2 1.5  -1.5 1 /
		\put{$\ssize s$} at -1.5 1.5
		\put{$\ssize0$} at 0 1.5
		\put{$\ssize1$} at 1 1.5
		\put{$\ssize2$} at 2 1.5
		\put{$\ssize3$} at 3 1.5
		\put{$\ssize4$} at 4 1.5
		\put{$\ssize5$} at 5 1.5
		\put{$\ssize6$} at 6 1.6
		\put{$\ssize7$} at 7 1.5
		\put{$\ssize8$} at 8 1.5
		\put{$\ssize9$} at 9 1.5
		\plot 5.5 -10  5.5 -8.5  4.5 -8.5  4.5 -6.5  3.5 -6.5 3.5 -4.5  2.5 -4.5 2.5 -2.5  1.5 -2.5
		   1.5 -0.5  0.5 -0.5  0.5 0.5 /
		   \setdots <.5mm>
		   \plot  0 0   1 -1 /
		   \plot  0 -1  2 -3 /
		   \plot  0 -2  3 -5 /
		   \plot  0 -3  4 -7 /
		   \plot  0 -4  5 -9 /
		   \plot  0 -5  5 -10 /
		   \plot  0 -6  4 -10 /
		   \plot  0 -7  3 -10 /
		   \plot  0 -8  2 -10 /
		   \plot  0 -9  1 -10 /

		   \put{} at 0 -10
		   \endpicture}
$$
\end{triS}

\begin{triS}\label{triangle3-S}{\bf The Lucas 
triangle \seqnum{A029635}, left of the staircase line is the increasing part}
$$
		   \hbox{\beginpicture
			\setcoordinatesystem units <.9cm,.45cm>
			\put{$\left[\begin{matrix} t\cr s \end{matrix}\right]$} at 7  -2
			\multiput{1} at  0 -1  0 -2  0 -3  0 -4  0 -5  0 -6  0 -7  0 -8  0 -9 /
			\put{$\cdot$} at 0 0
			\put{2} at 1 -1
			\put{3} at 1 -2
			\put{2} at 2 -2
			\put{4} at 1 -3
			\put{5} at 2 -3
			\put{2} at 3 -3
			\put{5} at 1 -4
			\put{9} at 2 -4
			\put{7} at 3 -4
			\put{2} at 4 -4
			\put{6} at 1 -5
			\put{14} at 2 -5
			\put{16} at 3 -5
			\put{9} at 4 -5
			\put{2} at 5 -5
			\put{7} at 1 -6
			\put{20} at 2 -6
			\put{30} at 3 -6
			\put{25} at 4 -6
			\put{11} at 5 -6
			\put{2} at 6 -6
			\put{8} at 1 -7
			\put{27} at 2 -7
			\put{50} at 3 -7
			\put{55} at 4 -7
			\put{36} at 5 -7
			\put{13} at 6 -7
			\put{2} at 7 -7
			\put{9} at 1 -8
			\put{35} at 2 -8
			\put{77} at 3 -8
			\put{105} at 4 -8
			\put{91} at 5 -8
			\put{49} at 6 -8
			\put{15} at 7 -8
			\put{2} at 8 -8
			\put{10} at 1 -9
			\put{44} at 2 -9
			\put{112} at 3 -9
			\put{182} at 4 -9
			\put{196} at 5 -9
			\put{140} at 6 -9
			\put{64} at 7 -9
			\put{17} at 8 -9
			\put{2} at 9 -9

			\put{$\ssize t$} at -2 1
			\put{$\ssize0$} at -2 0
			\put{$\ssize1$} at -2 -1
			\put{$\ssize2$} at -2 -2
			\put{$\ssize3$} at -2 -3
			\put{$\ssize4$} at -2 -4
			\put{$\ssize5$} at -2 -5
			\put{$\ssize6$} at -2 -6
			\put{$\ssize7$} at -2 -7
			\put{$\ssize8$} at -2 -8
			\put{$\ssize9$} at -2 -9

			\plot -2 1.5  -1.5 1 /
			\put{$\ssize s$} at -1.5 1.5
			\put{$\ssize0$} at 0 1.5
			\put{$\ssize1$} at 1 1.5
			\put{$\ssize2$} at 2 1.5
			\put{$\ssize3$} at 3 1.5
			\put{$\ssize4$} at 4 1.5
			\put{$\ssize5$} at 5 1.5
			\put{$\ssize6$} at 6 1.6
			\put{$\ssize7$} at 7 1.5
			\put{$\ssize8$} at 8 1.5
			\put{$\ssize9$} at 9 1.5
			\plot 5.5 -10  5.5 -8.5  4.5 -8.5  4.5 -6.5  3.5 -6.5 3.5 -4.5  2.5 -4.5 2.5 -2.5  1.5 -2.5
			   1.5 -0.5  0.5 -0.5 0.5 0 /
			   \setdots <.5mm>
			   \plot  0 -1  2 -3 /
			   \plot  0 -2  3 -5 /
			   \plot  0 -3  4 -7 /
			   \plot  0 -4  5 -9 /
			   \plot  0 -5  5 -10 /
			   \plot  0 -6  4 -10 /
			   \plot  0 -7  3 -10 /
			   \plot  0 -8  2 -10 /
			   \plot  0 -9  1 -10 /

			   \put{} at 0 -10
			   \endpicture}
$$
\end{triS}			   

\begin{remark} In the triangle \ref{triangle3} of type $\mathbb D$ and 
in the corresponding Lucas triangle S \ref{triangle3-S}
some values are left open (this is indicated by a dot).

In the Lucas triangle S \ref{triangle3-S}, this concerns the value at the position $(0,0)$ which
could be denoted as $\left[\smallmatrix 0\cr 0 \endsmallmatrix\right]$.
The value should be one of the numbers
$1$ or $2$ (in OEIS \seqnum{A029635},  the number is chosen to be $2$). Note that here we deal with
the product $\frac00\binom00$: whereas $\binom 0 0 = 1$ is well-defined, there is  
the ambiguous fraction $\frac 0 0$.

In the triangle \ref{triangle3} of type $\mathbb D$, the positions $(0,0), (0,1), (1,1)$ are left
open, since the series of Dynkin diagrams $\mathbb D_n$ starts with $n = 2$ (but see \seqnum{A129869});
by definition $\mathbb D_2 = \mathbb A_1\sqcup \mathbb A_1$ and $\mathbb D_3 = \mathbb A_3$.
As a consequence, also the corresponding entries in the summation sequence are missing.
\end{remark}
  
\setcounter{subsection}{3}
\subsection
{\bf Some observations concerning the triangles $\mathbb A, \mathbb B, \mathbb D$}
     
\begin{proposition} The sum sequence occurs as a diagonal.
\end{proposition}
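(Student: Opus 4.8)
The plan is to prove the three explicit identities
\[
a(\mathbb A_n)=a_n(\mathbb A_{n+1}),\qquad a(\mathbb B_n)=a_n(\mathbb B_{n+1}),\qquad a(\mathbb D_n)=a_{n-1}(\mathbb D_{n+2}),
\]
each of which displays the row-sum sequence $\bigl(a(\Delta_n)\bigr)_n$ as a diagonal of the corresponding triangle: the entry $a_n(\Delta_{n+1})$ sits on the diagonal $n-s=1$, while $a_{n-1}(\mathbb D_{n+2})$ sits on the diagonal $n-s=3$. First I would pass to the sheared triangles $z_s(t)$ with $a_s(n)=z_s(n+s-1)$, under which the additive recursion $z_s(t)=z_{s-1}(t-1)+z_s(t-1)$ holds. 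The point of the shearing is that the $n$-th row $\{a_s(\Delta_n):0\le s\le n\}$ of the original triangle becomes the diagonal segment $\{z_s(n-1+s):0\le s\le n\}$ of the sheared triangle, so that $a(\Delta_n)$ is a sum taken along a single diagonal.

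The key tool is a diagonal form of the hockey stick formula, obtained by telescoping the recursion: from $z_s(c+s)-z_{s-1}(c+s-1)=z_s(c-1+s)$ and summation over $s$ one gets
\[
\sum_{s=0}^{m} z_s(c-1+s)=z_m(c+m)-z_0(c)+z_0(c-1).
\]
Since $z_0(t)=1$ for all three triangles, the two boundary terms cancel and the diagonal sum collapses to the single entry $z_m(c+m)$. For $\mathbb A$ and $\mathbb B$ the entire row obeys the recursion, so applying this with $c=n$, $m=n$ gives $a(\Delta_n)=z_n(2n)$, and un-shearing through $z_n(2n)=a_n(\Delta_{n+1})$ yields the first two identities. (As a check against the closed forms, $z_n(2n)=\binom{2n}n$ for $\mathbb B$ and $\tfrac{1}{n+2}\binom{2n+2}{n+1}$ for $\mathbb A$, matching the tabulated values; for $\mathbb A$ the resulting entry lies on both the main diagonal and the subdiagonal because consecutive boundary entries of the Catalan triangle agree.)

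The case $\mathbb D$ is where the real work lies, and it is the main obstacle. As the Remark after the main Theorem records, the diagonal entries $a_n(\mathbb D_n)=\left[\smallmatrix 2n-2\cr n-2\endsmallmatrix\right]$ do \emph{not} follow the Lucas recursion (they are the deviating values, not $\left[\smallmatrix 2n-2\cr n\endsmallmatrix\right]$), so the telescoping applies only to the lower-triangular part. I would therefore split
\[
a(\mathbb D_n)=\sum_{s=0}^{n-1}\left[\smallmatrix n+s-2\cr s\endsmallmatrix\right]+\left[\smallmatrix 2n-2\cr n-2\endsmallmatrix\right],
\]
apply the hockey stick identity above inside the Lucas triangle $z_s(t)=\left[\smallmatrix t\cr s\endsmallmatrix\right]$ to evaluate the partial sum as $\left[\smallmatrix 2n-2\cr n-1\endsmallmatrix\right]$, and then combine the two summands by one further step of the Pascal-type recursion
\[
\left[\smallmatrix 2n-1\cr n-1\endsmallmatrix\right]=\left[\smallmatrix 2n-2\cr n-1\endsmallmatrix\right]+\left[\smallmatrix 2n-2\cr n-2\endsmallmatrix\right].
\]
This gives $a(\mathbb D_n)=\left[\smallmatrix 2n-1\cr n-1\endsmallmatrix\right]=a_{n-1}(\mathbb D_{n+2})$; the extra recursion step is exactly what shifts the relevant diagonal from $n-s=1$ (as for $\mathbb A,\mathbb B$) to $n-s=3$.

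Finally I would note that, since the closed forms for $a(\Delta_n)$ and $a_s(\Delta_n)$ are already available from the Theorem, each of the three identities can alternatively be verified by a direct manipulation of binomial, respectively Bailey, coefficients. I prefer the hockey-stick argument because it explains uniformly \emph{why} the sum sequence reappears inside the triangle and pins down the precise diagonal on which it lies.
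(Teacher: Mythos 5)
Your proof is correct and is essentially the paper's own argument: your hockey-stick telescoping of $z_s(t)=z_{s-1}(t-1)+z_s(t-1)$ is exactly the paper's inductive proof in Section~\ref{summation} of $\sum_{i=0}^{s}a_i(\Delta_n)=a_s(\Delta_{n+1})$ via the hook formula (Proposition~\ref{hook-formula}), and your treatment of $\mathbb D$ --- splitting off the deviating diagonal entry $a_n(\mathbb D_n)$ and absorbing it with one Pascal-type step for the Lucas numbers --- reproduces the paper's Corollary $a(\Delta_n)=a_n(\Delta_n)+a_{n-1}(\Delta_{n+1})$ together with its Case $\mathbb D_n$ computation. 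The only cosmetic differences are that you justify the recursion combinatorially from the closed forms rather than citing the representation-theoretically proved hook formula, and that for $\mathbb A,\mathbb B$ you telescope the full row so that the identification with a diagonal entry is automatic instead of requiring the paper's explicit binomial manipulation.
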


      In the $\mathbb A$-triangle, the sum sequence is the same sequence as the main diagonal 
(and these are just the Catalan numbers):
$$
       a(\mathbb A_n) = a_{n+1}(\mathbb A_{n+1}).
$$

       \noindent
       In the $\mathbb B$-triangle, the sum sequence is the same sequence as the second  diagonal
$$
        a(\mathbb B_n) = a_{n}(\mathbb B_{n+1}). \ \ 
$$

	\noindent
	In the $\mathbb D$-triangle, the sum sequence is the same sequence as the fourth diagonal
$$
	 a(\mathbb D_n) = a_{n-1}(\mathbb D_{n+2}).
$$
		
\begin{proposition} The main diagonal uses the same sequence as one of the other diagonals.
\end{proposition}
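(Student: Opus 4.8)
The plan is to handle the three triangles $\mathbb A, \mathbb B, \mathbb D$ in turn: in each case I would read off the main diagonal $\bigl(a_n(\Delta_n)\bigr)_n$ from the explicit formulas of the Theorem and then exhibit a single lower diagonal (the entries with $n-s$ constant) that reproduces the same sequence after a shift of the row index. Recall that the $j$-th diagonal consists of the entries $a_s(\Delta_n)$ with $n-s=j-1$, so the main diagonal is the first one, and in the previous proposition the row-sum sequence reappeared as the first, second, and fourth diagonal for $\mathbb A, \mathbb B, \mathbb D$ respectively.

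For type $\mathbb A$ the claim is that the main diagonal coincides with the second diagonal, and I would prove this by comparing the last two entries of each row. From $a_s(\mathbb A_n)=\frac{n-s+1}{n+1}\binom{n+s}s$ one gets $a_n(\mathbb A_n)=\frac1{n+1}\binom{2n}n$ and $a_{n-1}(\mathbb A_n)=\frac2{n+1}\binom{2n-1}{n-1}$, and the elementary identity $\binom{2n}n=2\binom{2n-1}{n-1}$ shows these agree. Thus in each row the two rightmost entries coincide, so the main diagonal and the second diagonal run through the same sequence (the Catalan numbers), the latter being the former shifted up by one row.

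For type $\mathbb B$ I would show that the main diagonal equals the third diagonal. Using $a_s(\mathbb B_n)=\binom{n+s-1}s$, the third-diagonal entry in row $n$ is $a_{n-2}(\mathbb B_n)=\binom{2n-3}{n-2}$, which is exactly $a_{n-1}(\mathbb B_{n-1})=\binom{2(n-1)-1}{(n-1)-1}$, the main-diagonal value one row higher; hence the third diagonal is the main diagonal shifted up by one row. For type $\mathbb D$ the analogous statement is that the main diagonal equals the fifth diagonal. The fifth-diagonal entry in row $n$ (with $n\ge 4$, so that the index $n-4$ satisfies $0\le n-4<n$ and the generic formula applies) is $a_{n-4}(\mathbb D_n)=\left[\smallmatrix 2n-6\cr n-4\endsmallmatrix\right]$, while the main-diagonal value two rows higher is $a_{n-2}(\mathbb D_{n-2})=\left[\smallmatrix 2(n-2)-2\cr (n-2)-2\endsmallmatrix\right]=\left[\smallmatrix 2n-6\cr n-4\endsmallmatrix\right]$; so the fifth diagonal is the main diagonal shifted up by two rows.

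The only genuine obstacle is the $\mathbb D$ case, because the main-diagonal values $a_n(\mathbb D_n)=\left[\smallmatrix 2n-2\cr n-2\endsmallmatrix\right]$ are given by the deviating $s=n$ formula rather than by the generic rule $\left[\smallmatrix n+s-2\cr s\endsmallmatrix\right]$. What makes the statement work is precisely that when this sequence reappears as the fifth diagonal its entries lie off the main diagonal (they have $s=n-4<n$), so they are computed by the generic formula, and one must check that the generic formula evaluated at $s=n-4$ returns the deviating value at the shifted position. The computation above confirms this, both sides equaling $\left[\smallmatrix 2n-6\cr n-4\endsmallmatrix\right]$. As a sanity check one may note the pattern of offsets: comparing with the previous proposition, where the row-sum sequence reappears as the first, second, and fourth diagonal, the main diagonal here reappears exactly one diagonal further out, namely as the second, third, and fifth.
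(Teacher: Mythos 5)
Your proof is correct and follows essentially the same route as the paper: the paper offers no argument beyond displaying the three identities $a_n(\mathbb A_n) = a_{n-1}(\mathbb A_n)$, $a_n(\mathbb B_n) = a_{n-1}(\mathbb B_{n+1})$, and $a_n(\mathbb D_n) = a_{n-2}(\mathbb D_{n+2})$, which are exactly the (reindexed) identities you verify from the formulas of the Theorem, including the key point that in type $\mathbb D$ the deviating $s=n$ formula agrees with the generic formula evaluated at the shifted off-diagonal position. The only discrepancy is cosmetic: for type $\mathbb B$ you speak of the third diagonal where the paper's text says ``second'', but the paper's own displayed formula $a_n(\mathbb B_n)=a_{n-1}(\mathbb B_{n+1})$ involves an offset of two, so your labeling is the one consistent with the diagonal-numbering convention of the preceding proposition.
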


\noindent
In the $\mathbb A$-triangle, this concerns the main diagonal and the second diagonal:
$$
 a_n(\mathbb A_n) = a_{n-1}(\mathbb A_{n}). \ \
$$
 In the $\mathbb B$-triangle, this concerns the main diagonal and the second diagonal:
$$
  a_n(\mathbb B_n) = a_{n-1}(\mathbb B_{n+1}).
$$
  In the $\mathbb D$-triangle, this concerns the main diagonal and the fifth diagonal:
$$
   a_n(\mathbb D_n) = a_{n-2}(\mathbb D_{n+2}).
$$

It may be of interest to exhibit explicit bijections between the corresponding sets
of support-tilting modules. It seems that only in the case $\mathbb A$, this can be done easily
(see Remark \ref{type-a}).

\subsection
{\bf Comparison between the Lucas triangle and the $\mathbb D$-triangle}

The difference between the number $\left[{\begin{matrix} 2n-2\cr n\end{matrix}}\right]$
and $\left[{\begin{matrix} 2n-2\cr n-2\end{matrix}}\right]$ seems to be of interest:
$$
      \left[{\begin{matrix} 2n-2\cr n\end{matrix}}\right]
        - \left[{\begin{matrix} 2n-2\cr n-2\end{matrix}}\right]
	   = \frac{1}{n}\binom{2n-2}{n-1}.
$$
This means the following:

\begin{proposition}\label{comparision}
	   $$
	      \left[\begin{matrix} 2n-2\cr n \end{matrix}\right] - a_n(\mathbb D_n) = a_{n-1}(\mathbb A_{n-1}).
	      $$
\end{proposition}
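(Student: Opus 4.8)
The plan is to recognize that this Proposition is simply the binomial identity displayed immediately above it, rewritten in terms of the closed formulas collected in the Theorem. First I would record the two substitutions that translate the statement. By the Theorem, $a_n(\mathbb D_n) = \left[\smallmatrix 2n-2\cr n-2\endsmallmatrix\right]$, and specializing the type-$\mathbb A$ entry $a_n(\mathbb A_n) = \frac1{n+1}\binom{2n}n$ to the value $n-1$ gives $a_{n-1}(\mathbb A_{n-1}) = \frac1n\binom{2n-2}{n-1}$. After these two substitutions the asserted equality becomes exactly $\left[\smallmatrix 2n-2\cr n\endsmallmatrix\right] - \left[\smallmatrix 2n-2\cr n-2\endsmallmatrix\right] = \frac1n\binom{2n-2}{n-1}$, so the entire proposition reduces to verifying this single scalar identity, with no representation theory beyond the already-established formulas.

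To prove the identity I would expand both Bailey symbols using the definition $\left[\smallmatrix t\cr s\endsmallmatrix\right] = \frac{s+t}t\binom ts$ with $t = 2n-2$, obtaining $\left[\smallmatrix 2n-2\cr n\endsmallmatrix\right] = \frac{3n-2}{2n-2}\binom{2n-2}{n}$ and $\left[\smallmatrix 2n-2\cr n-2\endsmallmatrix\right] = \frac{3n-4}{2n-2}\binom{2n-2}{n-2}$. The crucial observation is the symmetry $\binom{2n-2}{n} = \binom{2n-2}{n-2}$, which holds because the two lower indices add up to $2n-2$; writing $B$ for this common value, the difference of the two Bailey symbols telescopes to $\frac{(3n-2)-(3n-4)}{2n-2}B = \frac{2}{2n-2}B = \frac1{n-1}\binom{2n-2}{n}$.

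Finally I would check the elementary equality $\frac1{n-1}\binom{2n-2}{n} = \frac1n\binom{2n-2}{n-1}$, which follows from a one-line factorial computation (equivalently, both expressions equal the Catalan number $C_{n-1}$); this matches the right-hand side $a_{n-1}(\mathbb A_{n-1})$ and completes the argument. I do not expect any genuine obstacle: the content is an elementary binomial manipulation, and the only points requiring care are invoking the symmetry $\binom{2n-2}{n}=\binom{2n-2}{n-2}$ to pull out the common factor and keeping the bookkeeping straight between the two equivalent closed forms of $C_{n-1}$.
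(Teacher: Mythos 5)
Your proof is correct and takes essentially the same route as the paper: both reduce the proposition, via the closed formulas $a_n(\mathbb D_n)=\left[\smallmatrix 2n-2\cr n-2\endsmallmatrix\right]$ and $a_{n-1}(\mathbb A_{n-1})=\frac1n\binom{2n-2}{n-1}$, to a one-line scalar identity among binomial coefficients, verified by expressing everything over the common value $\binom{2n-2}{n}$. Your explicit appeal to the symmetry $\binom{2n-2}{n}=\binom{2n-2}{n-2}$ is in fact cleaner bookkeeping than the paper's own displayed rewritings (which contain compensating misprints but arrive at the same correct fraction identity $\frac{3n-4}{2n-2}+\frac1{n-1}=\frac{3n-2}{2n-2}$).
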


\begin{proof} We show that
$$
	        \frac{3n-4}{n}\binom{2n-2}{n-2}
		  +\frac{1}{n}\binom{2n-2}{n-1} \ =\
		    \frac{3n-2}{2n-2}\binom{2n-2}{n}
$$
We rewrite
\begin{eqnarray*}
		      \binom{2n-2}{n-2} &=& \frac{n}{2n-2} \binom{2n-2}{n}, \\
		        \binom{2n-2}{n-1} &=& \frac{n}{n-1}\binom{2n-2}{n}. 
\end{eqnarray*}
The assertion now follows from the equality
$$
 \frac{3n-4}{n}\cdot  \frac{n}{2n-2}\ +\ \frac{1}{n}\cdot  \frac{n}{n-1}\ =\ \frac{3n-2}{2n-2}.
$$
\end{proof}
	
\noindent
Here is a table of these numbers
$$
\hbox{\beginpicture
	\setcoordinatesystem units <1cm,.45cm>
	\put{$\ssize n$} at -2 1
	\put{$\left[{\begin{matrix} 2n-2\cr n\end{matrix}}\right]$}
	    [r] at 0.3 1
	    \put{$\left[{\begin{matrix} 2n-2\cr n-2\end{matrix}}\right]$}  [r] at 2.4 1
	    \put{$\dfrac{1}{n}\dbinom{2n-2}{n-1}$}  [r] at 4.9 1
	    \put{$\ssize2$} at -2 -2
	    \put{$\ssize3$} at -2 -3
	    \put{$\ssize4$} at -2 -4
	    \put{$\ssize5$} at -2 -5
	    \put{$\ssize6$} at -2 -6
	    \put{$\ssize7$} at -2 -7
	    \put{$\ssize8$} at -2 -8
	    \put{$\ssize9$} at -2 -9
	    \put{$2$} [r] at 0 -2
	    \put{$7$} [r]  at 0 -3
	    \put{$25$} [r]  at 0 -4
	    \put{$91$}  [r] at 0 -5
	    \put{$336$}  [r] at 0 -6
	    \put{$1254$}  [r] at 0 -7
	    \put{$4719$}  [r] at 0 -8
	    \put{$17875$}  [r] at 0 -9
	    \put{$1$}  [r] at 2 -2
	    \put{$5$}  [r] at 2 -3
	    \put{$20$}  [r] at 2 -4
	    \put{$77$}  [r] at 2 -5
	    \put{$294$}  [r] at 2 -6
	    \put{$1122$}  [r] at 2 -7
	    \put{$4290$}  [r] at 2 -8
	    \put{$16445$}  [r] at 2 -9
	    \put{$1$}  [r] at 4 -2
	    \put{$2$}  [r] at 4 -3
	    \put{$5$}  [r] at 4 -4
	    \put{$14$}  [r] at 4 -5
	    \put{$42$}  [r] at 4 -6
	    \put{$132$}  [r] at 4 -7
	    \put{$429$}  [r] at 4 -8
	    \put{$1430$}  [r] at 4 -9
	    \put{} at 0 -10
	    \endpicture}
$$

\begin{remark} Proposition \ref{comparision} is essentially the modified hook formula
for type $\mathbb D$
which will be presented in Proposition \ref{modified}:
the Lucas triangle uses the hook formula for the whole triangle, 
whereas the triangle $\mathbb D$ uses the modified hook formula on the subdiagonal.
\end{remark}
	    
\section{The exceptional cases}\label{exceptional}

Here are the numbers $a_s(\Delta_n)$ and $a(\Delta_n)$ in the exceptional cases
$\mathbb E_6, \mathbb E_7, \mathbb E_8, \mathbb F_4,$ and $\mathbb G_2$
(we add some suitable additional rows in order to stress the induction scheme):
$$
\hbox{\beginpicture
	\setcoordinatesystem units <.97cm,.45cm>
	\put{1} at 0 0
	\put{3} at 1 0
	\put{4} at 2 0
	\put{2} at 3 0

	\put{1} at 0 -1
	\put{4} at 1 -1
	\put{9} at 2 -1
	\put{14} at 3 -1
	\put{14} at 4 -1

	\put{1} at 0 -2
	\put{5} at 1 -2
	\put{14} at 2 -2
	\put{30} at 3 -2
	\put{55} at 4 -2
	\put{77} at 5 -2

	\put{1} at 0 -3
	\put{6} at 1 -3
	\put{20} at 2 -3
	\put{50} at 3 -3
	\put{110} at 4 -3
	\put{228} at 5 -3
	\put{418} at 6 -3

	\put{1} at 0 -4
	\put{7} at 1 -4
	\put{27} at 2 -4
	\put{77} at 3 -4
	\put{187} at 4 -4
	\put{429} at 5 -4
	\put{1001} at 6 -4
	\put{2431} at 7 -4

	\put{1} at 0 -5
	\put{8} at 1 -5
	\put{35} at 2 -5
	\put{112} at 3 -5
	\put{299} at 4 -5
	\put{728} at 5 -5
	\put{1771} at 6 -5
	\put{4784} at 7 -5
	\put{17342} at 8.1 -5

	\put{1} at 0 -7
	\put{3} at 1 -7
	\put{6} at 2 -7
	\put{10} at 3 -7

	\put{1} at 0 -8
	\put{4} at 1 -8
	\put{10} at 2 -8
	\put{24} at 3 -8
	\put{66} at 4 -8

	\put{1} at 0 -10
	\put{2} at 1 -10
	\put{5} at 2 -10

	\put{$$} at -2 2
	\put{$\mathbb E_3 = \mathbb A_2\sqcup\mathbb A_1$} [l] at -3 -0
	\put{$\mathbb E_4 = \mathbb A_4$} [l] at -3 -1
	\put{$\mathbb E_5 = \mathbb D_5$}  [l]  at -3 -2
	\put{$\mathbb E_6$}  [l]  at -3 -3
	\put{$\mathbb E_7$}  [l]  at -3 -4
	\put{$\mathbb E_8$}  [l]  at -3 -5
	\put{$$}  [l]  at -2.2 -6
	\put{$\mathbb B_3$}  [l]  at -3 -7
	\put{$\mathbb F_4$}  [l]  at -3 -8
	\put{$$}  [l]  at -2.2 -9
	\put{$\mathbb G_2$}  [l]  at -3 -10

	\plot -2 1.5  -1.5 1 /
	\put{$\ssize s$} at -1.5 1.5
	\put{$\ssize0$} at 0 1.5
	\put{$\ssize1$} at 1 1.5
	\put{$\ssize2$} at 2 1.5
	\put{$\ssize3$} at 3 1.5
	\put{$\ssize4$} at 4 1.5
	\put{$\ssize5$} at 5 1.5
	\put{$\ssize6$} at 6 1.6
	\put{$\ssize7$} at 7 1.5
	\put{$\ssize8$} at 8 1.5

	\put{} at 0 -10
	\put{sum} [r]  at 11 1.5
	\put{10}  [r] at 11 0
	\put{42}  [r] at 11 -1
	\put{182}  [r] at 11 -2
	\put{833}  [r] at 11 -3
	\put{4160}  [r] at 11 -4
	\put{25080}  [r] at 11 -5
	\put{}  [r] at 11 -6
	\put{20}   [r] at 11 -7
	\put{105}  [r] at 11 -8
	\put{}  [r] at 11 -9
	\put{8}  [r] at 11 -10

	\endpicture}
$$

\section{Hereditary artin algebras}\label{algebras}

\subsection{The basic setting}
Let $\Lambda$ be a hereditary artin algebra. Since by assumption $\Ext_\Lambda^i = 0$ 
for $i \ge 2$, we write
$\Ext(M,M')$ instead of $\Ext_\Lambda^1(M,M')$. The vertices of the
quiver $Q(\Lambda)$ are the isomorphism classes
$[S]$ of the simple $\Lambda$-modules $S$ and there is an arrow $[S] \to [S']$ provided $\Ext(S,S') \neq 0$.
Note that $Q(\Lambda)$ is finite and directed (the latter means that the simple modules can be labeled
$S(i)$ such that the existence of an arrow $[S(i)]\to [S(j)]$ implies that $i > j$).
We endow $Q(\Lambda)$ with a valuation as
follows: given an arrow $[S] \to [S']$, consider $\Ext(S,S')$ as a left $\End(S)^{\text{op}}$-module
and also as a left $\End(S')$-module and put
$$
 v([S],[S']) = (\dim {}_{\End(S)}\Ext(S,S'))(\dim {}_{\End(S')^{\text{op}}}\Ext(S,S'))
 $$
 provided $v([S],[S']) > 1$.
 Given a vertex $i$ of $Q(\Lambda)$, we let $S(i)=S_\Lambda (i), 
P(i) = P_\Lambda(i), I(i)=I_\Lambda(i)$, respectively, denote  a simple,
an indecomposable projective or  injective module
corresponding to the vertex $i$.

If $M$ is a module, the set of 
vertices of the quiver $Q(\Lambda(M))$  
will be called the {\it support} of $M$ and $M$ is said to be {\it sincere} provided
any vertex of $Q(\Lambda)$ belongs to the support of $M$ (thus provided the only idempotent
$e \in \Lambda$ with $eM = 0$ is $e=0$). 

We also will be interested in the corresponding valued graph
$\overline Q(\Lambda)$ which is obtained from the valued quiver $Q(\Lambda)$ by replacing
the arrows by edges: one says that one {\it forgets the orientation} of the quiver.

In the special case where $v([S],[S']) = v$ with $v = 2$ or $v = 3$, it is usual to
replace the arrow
$[S] \longrightarrow [S']$ by a double arrow $[S] \Longrightarrow [S']$ (if $v = 2$) or a similar triple arrow (if $v=3$).
Using the bimodule $\Ext(S,S')$ one obtains an embedding either of $\End S$ into $\End S'$, or of $\End S'$ into
$\End S$; thus one of the division rings is a subring of the other, with index equal to $v$. One marks
the relative size of the endomorphism rings by an additional arrowhead drawn in the middle of the edge, pointing
from the larger endomorphism ring to the smaller one (it should be stressed that these inner arrowheads must not
be confused with the outer ones).
For example, in case there are two simple modules labeled $1$
and $2$ with an arrow $1 \leftarrow 2$ and  $v(1,2) = 2$, there are the following two possibilities:
$$
\hbox{\beginpicture
	\setcoordinatesystem units <1cm,1cm>
	\put{\beginpicture
	\multiput{$\circ$} at  5 0  6 0 /
	\put{$\ssize 1$} at 5 -.3
	\put{$\ssize 2$} at 6 -.3
	\plot 5.8 0.03  5.2 0.03 /
	\plot 5.8 -.03  5.2 -.03 /
	\plot 5.3 0.1  5.1 0  5.3 -.1 /
	\plot 5.6 0.1  5.4 0  5.6 -.1 /
	\endpicture} at 0 0
	\put{\beginpicture
	\multiput{$\circ$} at  5 0  6 0 /
	\put{$\ssize 1$} at 5 -.3
	\put{$\ssize 2$} at 6 -.3
	\plot 5.8 0.03  5.2 0.03 /
	\plot 5.8 -.03  5.2 -.03 /
	\plot 5.3 0.1  5.1 0  5.3 -.1 /
	\plot 5.4 0.1  5.6 0  5.4 -.1 /
	\endpicture} at 3 0

	\endpicture}
	$$
	On the left we see that $\End S(1)$ is a division subring of $\End S(2)$. On the right,
	$\End S(2)$ is a division subring of $\End S(1)$. (Let us exhibit corresponding algebras:
	let $K:k$ be a field extension of degree $2$ and consider the algebras
	$\Lambda = \left[\smallmatrix  k & K \cr
	                                   0 & K \endsmallmatrix\right]$ and
					   $\Lambda' = \left[\smallmatrix K & K \cr
					                                  0 & k \endsmallmatrix\right]$;
  the left quiver shown above is $Q(\Lambda)$, and the right quiver is $Q(\Lambda')$.)

 Here are the corresponding valued graphs, which are
 obtained by forgetting the orientation (thus deleting the outer arrowheads, 
but not the inner ones):
$$
\hbox{\beginpicture
  \setcoordinatesystem units <1cm,1cm>
\put{\beginpicture
  \multiput{$\circ$} at  5 0  6 0 /
  \put{$\ssize 1$} at 5 -.3
  \put{$\ssize 2$} at 6 -.3
  \plot 5.8 0.03  5.2 0.03 /
  \plot 5.8 -.03  5.2 -.03 /
  \plot 5.6 0.1  5.4 0  5.6 -.1 /
\endpicture} at 0 0
\put{\beginpicture
  \multiput{$\circ$} at  5 0  6 0 /
  \put{$\ssize 1$} at 5 -.3
  \put{$\ssize 2$} at 6 -.3
  \plot 5.8 0.03  5.2 0.03 /
  \plot 5.8 -.03  5.2 -.03 /
  \plot 5.4 0.1  5.6 0  5.4 -.1 /
\endpicture} at 3 0
\endpicture}
$$
They are called $\mathbb B_2$ and $\mathbb C_2$, respectively
(observe that there is a difference between $\mathbb B_2$ and $\mathbb C_2$
only if they occur as subgraphs of larger graphs).

     We recall the following \cite{[DR1]}. 
{\it A connected hereditary artin algebra $\Lambda$ is representation-finite if and only if
$\overline Q(\Lambda)$ is one of the Dynkin diagrams 
$$
 \mathbb A_n, \mathbb B_n, \mathbb C_n, \mathbb D_n, \mathbb E_6, \mathbb E_7,
 \mathbb E_8, \mathbb F_4, \mathbb G_2
$$ 
and in this case the indecomposable $\Lambda$-modules correspond bijectively to
the positive roots.}
    
\subsection{Change of orientation}
    
We want to show that the number of basic tilting modules is 
independent of the orientation. 
We recall that a module is said to be {\it basic} provided
it is a direct sum of pairwise non-isomorphic indecomposable modules; an artin algebra
$\Lambda$ is {\it basic} provided the regular representation ${}_\Lambda\Lambda$ is basic.
In case $\Lambda$ is the path algebra of a quiver, we may refer to Ladkani \cite{[L]}. 
In the case
of the tensor algebra of a species (in particular in the case of the path algebra of a quiver),
any change of orientation is obtained by applying a sequence of
BGP-reflection functors;
see \cite{[DR2]}. For a general hereditary artin algebra $\Lambda$, we have to deal with APR-tilting functors as
defined by Auslander, Platzeck and Reiten 
\cite{[APR]}. In order to do so, we may assume that $\Lambda$
is basic. We start with a simple projective module $S$, write ${}_\Lambda\Lambda = S\oplus P$ with a
projective module $P$, and consider $W = P\oplus \tau^-S$ (where $\tau = \tau_\Lambda$ is the
Auslander-Reiten translation in $\mo\Lambda$)
and $\Lambda' = (\End W)^{\text{op}}$. Note that
$W$ is a tilting module (called an APR-tilting module) and the quiver $Q(\Lambda')$ is obtained from the
quiver $Q(\Lambda)$ by changing the orientation of all the arrows which involve the vertex
$\omega = [S]$. 
We let $\Lambda'' = (\End P)^{\text{op}}$ denote the restriction of $\Lambda$ to
the quiver $Q''$ obtained from $Q(\Lambda)$ by deleting the vertex $\omega$ and the arrows
ending in $\omega$. Of course, $Q''$ is also a subquiver of $Q(\Lambda')$ and 
$\Lambda''$ is the restriction of $\Lambda'$ to $Q''$ 
(thus $\Lambda$ is a one-point coextension of $\Lambda''$, whereas
$\Lambda'$ is a one-point extension of $\Lambda''$). We let $S'$ denote the simple 
$\Lambda'$-module with support $\omega$.
		     
\begin{proposition}\label{APR}
 Let $\Lambda$ be a hereditary artin algebra and $S$ a simple projective
 module. Let $W$ be the APR-tilting module defined by $S$ and $\Lambda' = (\End W)^{\text{op}}$.
 Then there is a canonical bijection $\eta$ between the basic tilting $\Lambda$-modules and the basic
 tilting $\Lambda'$-modules.
\end{proposition}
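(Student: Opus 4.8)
The plan is to exploit the Brenner--Butler torsion pair attached to the tilting module $W$ and to treat separately the tilting modules that do, and that do not, have $S$ as a direct summand. First I would record the shape of the two torsion pairs. Since $S$ is simple projective, $W = P \oplus \tau^- S$ satisfies $\Hom(W,M)=0$ only for $M \in \operatorname{add} S$, so the torsion-free class in $\mo\Lambda$ is $\mathcal F = \operatorname{add} S$ and the torsion class $\mathcal T$ consists of all modules without $S$ as a summand; dually, on the $\Lambda'$-side the torsion class is $\mathcal X = \operatorname{add} S'$ with $S'$ simple injective, and the torsion-free class $\mathcal Y$ consists of all modules without $S'$ as a summand. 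Both torsion pairs are split: $S$ is the unique indecomposable in $\mathcal F$, and since $S'$ is simple injective any copy of $S'$ inside an indecomposable would split off, so $S'$ is the unique indecomposable in $\mathcal X$. The functor $F = \Hom(W,-)$ restricts to an equivalence $\mathcal T \to \mathcal Y$ which, because both classes are closed under extensions, preserves and reflects $\Ext^1$.

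With this in hand I would define $\eta$ by cases. If $S$ is not a summand of a basic tilting module $T$, then every summand of $T$ lies in $\mathcal T$, so I set $\eta(T) = F(T)$: it has $n$ pairwise non-isomorphic summands in $\mathcal Y$, it is rigid because $F$ preserves $\Ext^1$ on $\mathcal T$, and over the hereditary algebra $\Lambda'$ a multiplicity-free rigid module with $n$ summands is tilting; moreover $S'$ is not among its summands. Running $F^{-1}$ backwards shows this is a bijection between tilting $\Lambda$-modules without $S$ and tilting $\Lambda'$-modules without $S'$. If instead $T = S \oplus \overline T$, I would first use that a surjection onto the projective module $S$ splits, so $\Hom(\overline T_i, S)=0$ for each indecomposable summand $\overline T_i\not\cong S$; together with the partial-tilting condition $\Ext^1(\overline T_i,S)=0$ this places $\overline T$ in the left perpendicular category ${}^\perp S = \{M : \Hom(M,S)=0=\Ext^1(M,S)\}$. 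The relevant classical fact is that the complements $\overline T$ of $S$ are exactly the tilting objects of ${}^\perp S$, and that ${}^\perp S$ is equivalent to $\mo \Lambda''$, where $\Lambda''$ is the restriction of $\Lambda$ to the quiver $Q''$ obtained by deleting $\omega$. Dually, the complements of $S'$ are the tilting objects of $S'^\perp$, and $S'^\perp \simeq \mo\Lambda''$ as well. Transporting $\overline T$ through ${}^\perp S \simeq \mo\Lambda'' \simeq S'^\perp$ to a tilting object $\overline{T'}$ and setting $\eta(T) = S' \oplus \overline{T'}$ then yields a multiplicity-free rigid module with $n$ summands over $\Lambda'$ (using $\overline{T'} \in S'^\perp$, that $S'$ is injective, and that $Q(\Lambda')$ has no loop), hence a tilting module, now having $S'$ as a summand.

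Finally I would check that the two cases assemble into a single bijection: the images in the first case are precisely the tilting $\Lambda'$-modules without $S'$, those in the second case precisely the tilting $\Lambda'$-modules having $S'$, and each case is itself bijective, so $\eta$ is a bijection; canonicity comes from $F$ in the first case and from the restriction functors to $\Lambda''$ in the second. The hard part will be the second case, namely the perpendicular-category step: identifying the complements of $S$ with the tilting objects of ${}^\perp S$ and establishing the equivalences ${}^\perp S \simeq \mo\Lambda'' \simeq S'^\perp$ compatibly with the one-point (co)extension structure relating $\Lambda$, $\Lambda'$ and $\Lambda''$. It is worth stressing why the naive recipe $\eta(S\oplus\overline T)=S'\oplus F(\overline T)$ breaks down: $F(\overline T)$ generally fails to lie in $S'^\perp$, so $S'\oplus F(\overline T)$ need not be rigid, and this is exactly why the reflected vertex must be handled through $\Lambda''$ rather than through $F$.
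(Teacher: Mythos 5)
Your proof is correct and takes essentially the same route as the paper's: the same two-case split, with $\Hom(W,-)$ handling the basic tilting modules not containing $S$, and a passage through $\Lambda''$ handling those of the form $S\oplus \overline T$. The paper's explicit constructions in the second case---quotienting the complement $\overline T$ by the trace of $S$ to get a tilting $\Lambda''$-module, then forming the universal extension by copies of $S'$---are precisely the concrete functors implementing the equivalences ${}^\perp S \simeq \mo\Lambda'' \simeq S'^\perp$ that you invoke.
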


\begin{proof}
In order to define $\eta$, we distinguish two cases.

First, if $T$ is a basic tilting module such that
$S$ is not a direct summand of $T$, let
$\eta(T) = \Hom(W,T)$; this is a basic tilting $\Lambda'$-module and 
$S'$ is not a direct summand of $\eta(T)$.

Second, consider a basic tilting $\Lambda$-module of the form $S\oplus T$. 
Let $T'' = T/U$, where $U$ is the sum of the images of all the maps $S \to T$. Obviously, $T''$ is a basic tilting
$\Lambda''$-module which we may consider as a $\Lambda'$-module.
We form the universal extension $T'$ of $T''$ using copies of $S'$. Then $T'\oplus S'$ is a basic tilting
$\Lambda'$-module (and $S'$ is a direct summand).  
\end{proof}

\begin{remark} 
We may identify the Grothendieck groups $K_0(\Lambda)$ and $K_0(\Lambda')$, using the common factor algebra
$\Lambda''$ and identifying the dimension vectors of $S$ and $S'$. Then, in the first case, the
dimension vector of $\eta(T)$ is obtained from the dimension vector of $T$ by applying the reflection $\sigma$
		  defined by $S$.
		  In the second case, the dimension vectors of $T$ and $\eta(T)$ coincide. Actually, here we use twice the internal
		  reflection defined by $S$ in \cite{[R]}, first in the category $\mo \Lambda$, second in the category $\mo \Lambda'$.
\end{remark}
		  	        
\subsection{The combinatorial backbone}
Let $\Lambda$  be a Dynkin algebra and assume that the vertices
of $Q(\Lambda)$  are labeled $1\le i \le n$.
Let $P(i) = P_\Lambda(i)$ be indecomposable projective.
Since we assume that $\Lambda$
is a Dynkin algebra, there is a natural number $q(i) = q(P(i))$
such that $\tau^{-q(i)}P(i)$ is indecomposable injective;
the modules $M(i,u) = \tau^{-u}P(i)$ with $0 \le u \le q(i)$ and $1\le i \le n$
furnish a complete list of the indecomposable $\Lambda$-modules.
	 
\begin{proposition}\label{hammock}
Let $\Lambda, \Lambda'$ be Dynkin algebras and assume that
the simple modules of both algebras are indexed by $1\le i \le n$.
Assume that $q(P_\Lambda(i)) = q(P_{\Lambda'}(i)) = q(i)$ for all $1\le i \le n$.
If the support of $M(u,i) = \tau_\Lambda^{-u}P_\Lambda(i)$
and $M'(i,u) = \tau_{\Lambda'}^{-u}P_{\Lambda}(i)$ coincide for  all
$0 \le u \le q(i)$ and $1\le i \le n$, then
$a_s(\Lambda) = a_s(\Lambda')$ for all $s$.
\end{proposition}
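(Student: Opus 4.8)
The plan is to establish a combinatorial equivalence between the module categories $\mo\Lambda$ and $\mo\Lambda'$ that preserves exactly the data needed to count support-tilting modules, namely supports and the self-extension/orthogonality relations. The hypothesis gives us a labelling of indecomposables $M(i,u) = \tau_\Lambda^{-u}P_\Lambda(i)$ and $M'(i,u) = \tau_{\Lambda'}^{-u}P_{\Lambda'}(i)$ by the same index set $\{(i,u) : 1\le i\le n,\ 0\le u \le q(i)\}$, and it guarantees that corresponding indecomposables have the same support. I would set up the bijection $\phi\colon \operatorname{ind}\Lambda \to \operatorname{ind}\Lambda'$ sending $M(i,u)\mapsto M'(i,u)$, and then extend it additively to multiplicity-free modules. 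The whole proof reduces to showing that $\phi$ respects the combinatorial structure that defines support-tilting modules.

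\textbf{The key steps.} First I would observe that a multiplicity-free module $T$ of support-rank $s$ is support-tilting if and only if (i) $T$ has no self-extensions, i.e.\ $\operatorname{Ext}(T,T)=0$, and (ii) the number of indecomposable summands of $T$ equals the support-rank $s$ of $T$. Since $\phi$ preserves supports by hypothesis, condition (ii) is automatically transported: the support of $\bigoplus_k M(i_k,u_k)$ is the union of the supports of the summands, which equals the support of $\bigoplus_k M'(i_k,u_k)$, so $T$ and $\phi(T)$ have the same support-rank, and of course the same number of indecomposable summands. The real content is therefore condition (i): I must show $\operatorname{Ext}_\Lambda(M(i,u),M(j,v)) = 0$ if and only if $\operatorname{Ext}_{\Lambda'}(M'(i,u),M'(j,v)) = 0$ for all index pairs. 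The natural tool here is the hammock / Auslander--Reiten theory for Dynkin algebras: for a hereditary algebra one has $\operatorname{Ext}(X,Y)\cong D\overline{\operatorname{Hom}}(Y,\tau X)$, and in the Dynkin case the dimensions of $\operatorname{Hom}$ and $\operatorname{Ext}$ spaces between indecomposables are determined by the combinatorics of the AR-quiver, which in turn is governed by the function $q$ together with the supports. The assumption $q(P_\Lambda(i)) = q(P_{\Lambda'}(i))$ ensures the two AR-quivers have the same shape (the $\tau$-orbits have matching lengths), and the coincidence of supports pins down the vanishing pattern of the hammocks.

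\textbf{The main obstacle} will be step (i): verifying that the vanishing of $\operatorname{Ext}$ between two indecomposables is determined purely by the pair of supports together with the orbit-length data $q$, independently of the orientation. The cleanest route is to express $\operatorname{Ext}_\Lambda(M(i,u),M(j,v))$ entirely in terms of dimension vectors and the symmetrized Euler form: since $\Lambda$ is hereditary, $\langle \dimv X,\dimv Y\rangle = \dim\operatorname{Hom}(X,Y) - \dim\operatorname{Ext}(X,Y)$, and for a Dynkin algebra each indecomposable is a brick with $\operatorname{End}$ a division ring, so the Euler form and the knowledge of which of $\operatorname{Hom}$, $\operatorname{Ext}$ is nonzero determine everything. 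Because the dimension vector of $M(i,u)$ is a positive root whose support is exactly the prescribed support, and because the Euler form is an invariant of the underlying valued graph (not its orientation), I expect that the supports plus the $q$-data determine the dimension vectors up to the action that preserves the bilinear form, and hence determine the $\operatorname{Ext}$-vanishing. Making this last implication precise --- that matching supports and orbit lengths force the $\operatorname{Hom}$/$\operatorname{Ext}$ bricks to agree between $\Lambda$ and $\Lambda'$ --- is where the careful argument lies; once it is in place, $\phi$ restricts to a bijection between support-tilting $\Lambda$-modules and support-tilting $\Lambda'$-modules of each fixed support-rank $s$, giving $a_s(\Lambda) = a_s(\Lambda')$ as claimed.
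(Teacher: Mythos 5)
Your overall strategy is sound and, at the level of structure, close to what the paper does: transport modules through the bijection $\phi\colon M(i,u)\mapsto M'(i,u)$, observe that supports and summand counts are preserved, and reduce everything to showing that the vanishing of extension/homomorphism spaces between corresponding indecomposables agrees. (The paper counts antichains of support-rank $s$ rather than support-tilting modules directly, but the crux is identical.) The problem is that the one step carrying all the mathematical content is exactly the step you leave open: you state that verifying $\Ext_\Lambda(M(i,u),M(j,v))=0$ if and only if $\Ext_{\Lambda'}(M'(i,u),M'(j,v))=0$ is ``where the careful argument lies,'' and you never give that argument. As written, the proof has a genuine gap at its core, since without this step $\phi$ is not known to preserve rigidity.

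Moreover, the route you sketch for filling the gap would not work. You propose to compare Euler forms and dimension vectors, invoking the fact that the (symmetrized) Euler form depends only on the underlying valued graph and not on the orientation. But the hypotheses of the proposition do not place $\Lambda$ and $\Lambda'$ on the same valued graph: the principal application is $\Lambda$ of type $\mathbb B_n$ and $\Lambda'$ of type $\mathbb C_n$, whose valued graphs, root systems, and Euler forms are genuinely different. The hypothesis matches up only the \emph{supports} of corresponding indecomposables, not their dimension vectors, so there is no identification of Grothendieck groups under which a bilinear-form computation for $\Lambda$ could be compared with one for $\Lambda'$; trying to prove that the relevant signs agree leads you back to the very question at issue. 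What actually closes the gap is an argument using only supports, via the Auslander--Reiten translation: for $u\le v$,
$$\Hom(M(i,u),M(j,v))\;\simeq\;\Hom(M(i,0),M(j,v-u))\;=\;\Hom(P_\Lambda(i),M(j,v-u)),$$
which is nonzero precisely when $i$ lies in the support of $M(j,v-u)$ --- and that is exactly the datum assumed to coincide for $\Lambda$ and $\Lambda'$; for $u>v$ one has $\Hom(M(i,u),M(j,v))\simeq\Hom(M(i,u-v),M(j,0))=0$, because over a hereditary algebra there is no nonzero map from a non-projective indecomposable to a projective module. This settles the Hom-vanishing pattern, and your Ext-condition then follows from the hereditary Auslander--Reiten formula $\Ext(X,Y)\simeq D\Hom(Y,\tau X)$, since $\tau M(i,u)=M(i,u-1)$ for $u\ge 1$ and $\Ext(M(i,0),-)=0$ automatically. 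With this reduction supplied, your argument goes through; without it, the proof is incomplete.
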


\begin{proof}
We may interpret the numbers $a_s(\Lambda)$ and
$a_s(\Lambda')$ as the number of antichains in $\mo\Lambda$ and $\mo\Lambda'$,
respectively, which have support-rank $s$. Note that the support of a module $M$
is the set of numbers $1\le i \le n$ such that $\Hom(P(i),M) \neq 0$.

Note that $\Hom(M(i,u),M(j,v)) = 0$ if and only if
$\Hom(M'(i,u),M'(j,v))$ $ = 0$. Namely,
if $u\le v$, the Auslander-Reiten translation (see for example \cite{[ARS]})
furnishes a group isomorphism
\begin{eqnarray*}
 \Hom(M(i,u),M(j,v)) &\simeq& \Hom(M(i,0),M(j,v-u)) \cr
  &=&\Hom(P_{\Lambda}(i),M(j,v-u)),
\end{eqnarray*}
and similarly we have
 $ \Hom(M'(i,u),M'(j,v)) \simeq  \Hom(P_{\Lambda'}(i),M'(j,v-u))$.
 It follows that $\Hom(M(i,u),M(j,v)) = 0$ if and only if
 $i$ is not in the support of $M(j,v-u)$ if and only if
 $i$ is not in the support of $M'(j,v-u)$ if and only if
 $\Hom(M'(i,u),M'(j,v)) = 0$.

 If $u > v$, then
 $$
  \Hom(M(i,0),M(j,v)) \simeq \Hom(M(i,u-v),M(j,0)) = 0,
  $$
  since $M(i,u-v)$ is indecomposable and non-projective, whereas $M(j,0)$ is
  projective. Similarly, we also have
  $ \Hom(M'(i,0),M'(j,v)) = 0$.

  As a consequence we see that given an antichain
  $A = \{A_1,\dots,A_t\}$ in $\mo\Lambda$, the function $M(i,u)\mapsto M'(i,u)$
  yields an antichain $A' = \{A'_1,\dots,A'_t\}$ in $\mo\Lambda'$.
   Of course, the support-rank of $A$ and $A'$ are the same.
   This completes the proof. 
\end{proof}

\begin{corollary}The numbers $a_s(\Lambda)$ depend only on the Dynkin type of $Q(\Lambda)$, not on $\Lambda$ itself.
\end{corollary}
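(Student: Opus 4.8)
The plan is to deduce the Corollary from Proposition~\ref{hammock} together with the change-of-orientation result (Proposition~\ref{APR}). The statement asserts that $a_s(\Lambda)$ depends only on the underlying Dynkin diagram $\overline Q(\Lambda)$. Since two hereditary artin algebras of the same Dynkin type may differ both in orientation and in the underlying division rings attached to the valuation, I would first reduce to showing invariance under change of orientation, and then invoke the hammock comparison to pin down that even the choice of valued structure is irrelevant once the combinatorial shape of the Auslander--Reiten quiver is fixed.

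First I would observe that any two orientations of a fixed Dynkin diagram are connected by a sequence of BGP/APR reflections at sources that are simple projective vertices. By Proposition~\ref{APR}, each such reflection induces a canonical bijection $\eta$ between the basic tilting modules of $\Lambda$ and those of $\Lambda' = (\End W)^{\mathrm{op}}$. The key point I would need to verify is that $\eta$ preserves support-rank: in the first case of the proof of Proposition~\ref{APR} the dimension vector is transformed by the reflection $\sigma$ at $\omega$, and in the second case the dimension vectors coincide, so in both cases the support (the set of vertices $i$ with $\Hom(P(i),-)\neq 0$) changes only in a controlled way that leaves its cardinality invariant. More precisely, this bijection extends from tilting modules to \emph{support}-tilting modules by applying it fibrewise over each support subquiver, so one obtains $a_s(\Lambda) = a_s(\Lambda')$ whenever $\Lambda, \Lambda'$ differ by a single reflection, and hence for any two orientations.

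Next I would handle the valuation. Having fixed the orientation, I claim the hypotheses of Proposition~\ref{hammock} are automatically satisfied by any two Dynkin algebras of the same valued type. Indeed, for a Dynkin algebra the indecomposables are $M(i,u) = \tau^{-u}P(i)$ with $0\le u\le q(i)$, and both the numbers $q(i)$ and the supports of all the $M(i,u)$ are determined purely by the shape of the Auslander--Reiten quiver, which in turn is fixed by the Dynkin diagram (the indecomposables correspond bijectively to the positive roots, as recalled from \cite{[DR1]}). Thus $q(P_\Lambda(i)) = q(P_{\Lambda'}(i))$ and the supports of $M(i,u)$ and $M'(i,u)$ agree, so Proposition~\ref{hammock} yields $a_s(\Lambda) = a_s(\Lambda')$ directly.

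The main obstacle I anticipate is not in either invariance step individually but in matching the two combinatorial frameworks cleanly: the APR argument reasons through dimension vectors and reflection functors, whereas the hammock argument reasons through $\Hom$-vanishing and supports of the $\tau$-orbit modules. I would need to confirm that the labelling of vertices $1\le i\le n$ can be chosen compatibly so that the orientation reduction lands exactly in the setup where Proposition~\ref{hammock} applies, i.e.\ that after fixing a reference orientation the remaining freedom is entirely the valued-diagram data captured by $q(i)$ and the supports. Once this bookkeeping is arranged, the Corollary follows by composing the two invariances, so no genuinely new computation is required beyond verifying that $\eta$ respects support-rank.
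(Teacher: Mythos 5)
Your proposal follows essentially the same route as the paper: Proposition \ref{APR} supplies independence of orientation and Proposition \ref{hammock} supplies independence of the particular algebra realizing a given valued quiver (the paper simply invokes the two propositions in the opposite order, which is immaterial). Your additional care in extending the APR bijection from tilting modules to support-tilting modules of smaller support-rank -- arguing fibrewise over the support subquivers -- fills in a point that the paper's two-sentence proof leaves implicit, and is the natural way to do so.
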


\begin{proof} According to Proposition \ref{hammock}, the numbers $a_s(\Lambda)$ depend only on $Q(\Lambda)$.
According to Proposition \ref{APR}, the orientation of $Q(\Lambda)$ does not play a role.
\end{proof}

Thus, is $\Lambda$ is of Dynkin type $\Delta$, 
we write $a_s(\Delta)$ instead of $a_s(\Lambda)$. 

\begin{corollary}For all $0\le s \le n$, we have $a_s(\mathbb B_n) = a_s(\mathbb C_n)$.
\end{corollary}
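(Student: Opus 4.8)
The plan is to apply Proposition \ref{hammock} to the two algebras of Dynkin type $\mathbb B_n$ and $\mathbb C_n$. The key point is that $\mathbb B_n$ and $\mathbb C_n$ have the same underlying valued graph; they differ only in the placement of the inner arrowhead (that is, in which of the two endomorphism division rings is the larger one), as the discussion preceding the statement makes explicit. So I would first fix two hereditary artin algebras $\Lambda$ and $\Lambda'$ realizing $\mathbb B_n$ and $\mathbb C_n$ respectively, and by the preceding corollary I am free to choose the orientation; I would orient both in the \emph{same} linear fashion, with the multiple edge at the same end of the diagram, indexing the simple modules $1 \le i \le n$ compatibly for both algebras.

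With the orientations and indexing fixed, the goal is to verify the hypotheses of Proposition \ref{hammock}: that $q(P_\Lambda(i)) = q(P_{\Lambda'}(i))$ for all $i$, and that the supports of $\tau_\Lambda^{-u}P_\Lambda(i)$ and $\tau_{\Lambda'}^{-u}P_{\Lambda'}(i)$ coincide for all admissible $u$ and $i$. The natural way to see this is through the theory of \cite{[DR1]}: for a Dynkin algebra the indecomposables correspond bijectively to the positive roots, the dimension vector of $M(i,u)$ is the corresponding positive root, and the combinatorics of the Auslander-Reiten quiver — in particular the $\tau$-orbits and the numbers $q(i)$ — depend only on the underlying valued graph together with the orientation, not on the relative sizes of the division rings encoded by the inner arrowheads. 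Since the support of an indecomposable is just the set of vertices in the support of its dimension vector, and since $\mathbb B_n$ and $\mathbb C_n$ share the same valued graph and I have chosen the same orientation, the positive-root combinatorics is literally identical for $\Lambda$ and $\Lambda'$. Hence the supports agree termwise, and the numbers $q(i)$ agree.

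Once these hypotheses are checked, Proposition \ref{hammock} gives $a_s(\Lambda) = a_s(\Lambda')$ for all $s$, which is exactly $a_s(\mathbb B_n) = a_s(\mathbb C_n)$ after invoking the preceding corollary to pass from a specific algebra to the Dynkin type. I expect the main obstacle to be the careful justification that the AR-combinatorics (the $\tau$-orbit lengths $q(i)$ and the dimension vectors, hence the supports) genuinely coincide for $\mathbb B_n$ and $\mathbb C_n$ under a common orientation. This is intuitively clear because the valuation $v([S],[S']) = 2$ is the same on the shared edge and only the inner arrowhead differs, but one must argue that the Coxeter-transformation / reflection-functor bookkeeping that governs $\tau$ is insensitive to this inner arrowhead. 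The cleanest route is to observe that the reflection functors and the Coxeter transformation act on dimension vectors through the \emph{Cartan matrix of the valued graph}, which is identical for $\mathbb B_n$ and $\mathbb C_n$ (the symmetrizable Cartan matrix sees only the edge valuation, via the product of the two inner labels), so the entire list of positive roots and the whole orbit structure under $\tau$ is the same for both.
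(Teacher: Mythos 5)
Your overall strategy is exactly the paper's: fix linearly oriented algebras $\Lambda$, $\Lambda'$ of types $\mathbb B_n$ and $\mathbb C_n$ and apply Proposition \ref{hammock}, so that everything reduces to checking that the orbit lengths $q(P_\Lambda(i))=q(P_{\Lambda'}(i))$ and the supports of $M(i,u)=\tau^{-u}P(i)$ and $M'(i,u)$ agree. However, the justification you give for this check rests on a false claim. The symmetrizable Cartan matrix of a valued quiver is \emph{not} determined by the products $d_{ij}d_{ji}$ alone; it records the individual labels $(d_{ij},d_{ji})$, i.e.\ precisely the data of the inner arrowhead. The Cartan matrices of $\mathbb B_n$ and $\mathbb C_n$ are transposes of one another, not equal, and the corresponding root systems are \emph{dual}, not identical. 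Consequently ``the entire list of positive roots'' is genuinely different for the two algebras: already in rank $2$, one of the two algebras has indecomposables with dimension vectors $(1,0),(0,1),(1,1),(1,2)$, while the other has $(1,0),(0,1),(1,1),(2,1)$. So the Coxeter/reflection bookkeeping that governs $\tau$ does see the inner arrowhead, and the step you yourself flagged as the main obstacle is not closed by your argument.

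What is true --- and all that Proposition \ref{hammock} actually needs --- is that the \emph{supports} match, and this does survive the duality: the map $\alpha\mapsto\alpha^\vee = 2\alpha/(\alpha,\alpha)$ carries the positive roots of $B_n$ bijectively onto the positive roots of $C_n$, rescales each coordinate of a root by a positive factor (hence preserves supports), and intertwines the simple reflections, hence the Coxeter transformations for a common ordering of the vertices; together with the correspondence between indecomposables and positive roots from \cite{[DR1]} this yields that $q(i)$ and the supports of $M(i,u)$, $M'(i,u)$ coincide. Alternatively one can simply knit the two Auslander--Reiten quivers, which is what the paper does implicitly when it asserts as ``well-known (and easy to see)'' that $q(P_\Lambda(i)) = n-1 = q(P_{\Lambda'}(i))$ for all $i$ and that corresponding modules have equal support. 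With that repair your proof goes through; as written, the key verification is supported by an incorrect statement rather than a proof.
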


\begin{proof}  Apply the Proposition \ref{hammock} to the algebras $\Lambda$ and $\Lambda'$ with valued quivers
$$
\hbox{\beginpicture
	\setcoordinatesystem units <1cm,1cm>
	\put{\beginpicture
	\multiput{$\circ$} at 0 0  1 0  2 0 4 0  5 0 /
	\put{$\cdots$} at 3 0
	\arr{0.8 0}{0.2 0}
	\arr{1.8 0}{1.2 0}
	\arr{2.5 0}{2.2 0}
	\arr{4.8 0}{4.2 0}
	\plot 3.8 0  3.5 0 /
	\put{$\ssize 1$} at 0 -.3
	\put{$\ssize 2$} at 1 -.3
	\put{$\ssize 3$} at 2 -.3
	\put{$\ssize {n-2}$} at 4 -.3
	\put{$\ssize {n-1}$} at 5 -.3
	\put{$\ssize n$} at 6 -.3
	\plot 5.8 0.03  5.2 0.03 /
	\plot 5.8 -.03  5.2 -.03 /
	\plot 5.3 0.1  5.1 0  5.3 -.1 /
	\plot 5.4 0.1  5.6 0  5.4 -.1 /
	\multiput{$\circ$} at 6 0 /
	\endpicture} at 0 0
	\put{\beginpicture
	\multiput{$\circ$} at 0 0  1 0  2 0 4 0  5 0 /
	\put{$\cdots$} at 3 0
	\arr{0.8 0}{0.2 0}
	\arr{1.8 0}{1.2 0}
	\arr{2.5 0}{2.2 0}
	\arr{4.8 0}{4.2 0}
	\plot 3.8 0  3.5 0 /
	\put{$\ssize 1$} at 0 -.3
	\put{$\ssize 2$} at 1 -.3
	\put{$\ssize 3$} at 2 -.3
	\put{$\ssize {n-2}$} at 4 -.3
	\put{$\ssize {n-1}$} at 5 -.3
	\put{$\ssize n$} at 6 -.3
	\plot 5.8 0.03  5.2 0.03 /
	\plot 5.8 -.03  5.2 -.03 /
	\plot 5.3 0.1  5.1 0  5.3 -.1 /
	\plot 5.6 0.1  5.4 0  5.6 -.1 /
	\multiput{$\circ$} at 6 0 /
	\endpicture} at 0 -1
	\endpicture}
$$
respectively; the first valued quiver is of type $\mathbb B_n$, and the second is of
type $\mathbb C_n$. It is well-known (and easy to see) that $q(P_\Lambda(i)) = n\!-\!1
= q(P_{\Lambda'}(i))$ for all $1\le i \le n$ and that the modules
$M(i,u)$ and $M'(i,u)$ for $1\le i\le n$ and $0\le u \le n-1$ have the same
support. \end{proof}

\section{The tilting modules for $\mathbb B_n$}\label{tilting}

We are going to determine the number of tilting modules for the Dynkin algebras of 
type $\mathbb B_n$;
namely we will show that $a_n(\mathbb B_n) = \binom{2n-1}{n-1}$. By induction, we assume knowledge about
the representation theory of $\mathbb B_i$ with $i < n$, as well as the calculation of
$a_s(\mathbb B_n)$ for $s < n$ as shown in Section \ref{hook}.
We consider a Dynkin algebra $\Lambda$ with quiver
$$
\hbox{\beginpicture
	\setcoordinatesystem units <1cm,1cm>
	\multiput{$\circ$} at 0 0  1 0  2 0 4 0  5 0 /
	\put{$\cdots$} at 3 0
	\arr{0.8 0}{0.2 0}
	\arr{1.8 0}{1.2 0}
	\arr{2.5 0}{2.2 0}
	\arr{4.8 0}{4.2 0}
	\plot 3.8 0  3.5 0 /
	\put{$\ssize 1$} at 0 -.3
	\put{$\ssize 2$} at 1 -.3
	\put{$\ssize 3$} at 2 -.3
	\put{$\ssize {n-2}$} at 4 -.3
	\put{$\ssize {n-1}$} at 5 -.3
	\put{$\ssize n$} at 6 -.3
	\plot 5.8 0.03  5.2 0.03 /
	\plot 5.8 -.03  5.2 -.03 /
	\plot 5.3 0.1  5.1 0  5.3 -.1 /
	\plot 5.4 0.1  5.6 0  5.4 -.1 /
	\multiput{$\circ$} at 6 0 /
	\endpicture}
$$
	We interpret $a_n(\mathbb B_n)$ as the number of sincere antichains
(by definition, an antichain $A = \{A_1,\dots, A_t\}$ is {\it sincere} provided
the module $\bigoplus A_i$ is sincere)
and write it as the sum
$$
 a_n(\mathbb B_n) = u(\mathbb B_n) + v(\mathbb B_n)
$$
 where
 $u(\mathbb B_n)$ is the number of antichains with a sincere element,
 whereas $v(\mathbb B_n)$ is the number of sincere antichains without
 a sincere element. These two numbers will be calculated separately.
   
\subsection{The calculation of $u(\mathbb B_n)$}
 We let $w(\mathbb B_n)$ denote the number of antichains which do not contain any
   injective module. 

\begin{lemma}
$$
     w(\mathbb B_n) = a_n(\mathbb B_n).
$$
\end{lemma}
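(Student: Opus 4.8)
The plan is to prove the identity by an equinumerosity argument, reducing everything to antichains. Since $\Lambda$ is a Dynkin algebra, every indecomposable module is a brick (\cite{[DR1]}), so an antichain is simply a set of pairwise orthogonal indecomposables, and—as recorded in the proof of Proposition~\ref{hammock}—$a_n(\mathbb B_n)$ is the number of antichains of support-rank $n$, that is, the number of sincere antichains. Thus the task is to show that the injective-free antichains are equinumerous with the sincere antichains.

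First I would exploit the inverse Auslander--Reiten translation $\tau^{-1}$. The $\Hom$-preservation used in Proposition~\ref{hammock} gives, for non-injective indecomposables $X,Y$, the equivalence $\Hom(X,Y)=0$ if and only if $\Hom(\tau^{-1}X,\tau^{-1}Y)=0$. Since no element of an injective-free antichain is injective, $\tau^{-1}$ is defined on each element, sends bricks to bricks, and preserves pairwise orthogonality; its image consists of non-projective modules. Hence $A\mapsto\tau^{-1}A$ is a bijection from the injective-free antichains onto the projective-free antichains (with inverse $\tau$), and $w(\mathbb B_n)$ equals the number of projective-free antichains.

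The remaining and decisive step is to match these with the sincere antichains. This is exactly where the real content lies: the correspondence is \emph{not} given by $\tau^{-1}$ (already for $\mathbb B_2$ one checks that the injective-free, the projective-free, and the sincere antichains are three \emph{distinct} families of equal size). I would therefore proceed by induction on $n$, using the inductively available numbers $a_s(\mathbb B_n)$ for $s<n$ (Section~\ref{hook}) together with the representation theory of $\mathbb B_{n-1}$. Concretely, delete the end vertex $n$ (the one incident to the double edge) to obtain $\mathbb B_{n-1}$, and classify the antichains of $\mathbb B_n$—both the injective-free ones and the sincere ones—according to whether and how their elements involve the modules supported at $n$; restriction along this deletion should express each count through $\mathbb B_{n-1}$-data plus correction terms recorded by the $a_s(\mathbb B_n)$ with $s<n$.

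The main obstacle is the bookkeeping in this last step: the sincerity condition (full support) and the injective-free condition behave quite differently under the passage to $\mathbb B_{n-1}$, so the difficulty is to organize the case analysis so that the two resulting recursions coincide and have the same initial values (for instance the base case $\mathbb B_2$, where both counts equal $3$). Controlling the antichains that genuinely use the projective $P(n)$ or the injectives attached to the new vertex, and showing that their contributions agree on the two sides, is the heart of the argument.
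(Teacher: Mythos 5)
You have a genuine gap: the proposal never proves the lemma. Your first two steps are sound but only reformulate it --- identifying $a_n(\mathbb B_n)$ with the number of sincere antichains and $w(\mathbb B_n)$ with the number of injective-free antichains, then using $\tau^{-1}$ to put the injective-free antichains in bijection with the projective-free ones. The decisive step, matching either of these families with the sincere antichains, is exactly the assertion to be proved, and at that point you only announce a plan (induction on $n$, deletion of the vertex at the valued edge, case bookkeeping) without executing any of it; you say yourself that this is ``the heart of the argument'' and describe it as an obstacle rather than resolving it. Reducing a counting identity to an equivalent counting identity plus an unexecuted induction is not a proof. Moreover, the $\tau^{-1}$ detour buys nothing: projective-free antichains are no easier to compare with sincere ones than injective-free ones were.

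The idea you are missing is that no recursion is needed; there is a short explicit bijection. For the chosen orientation the indecomposable injectives $I(1),\dots,I(n)$ are pairwise comparable under $\Hom$, so a sincere antichain contains at most one injective module; moreover $\Hom(M,I(i))\neq 0$ if and only if $i\in\Supp M$. Hence deleting the unique injective $I(i)$ from a sincere antichain yields an injective-free antichain whose support avoids $i$. Conversely, if $B$ is an injective-free antichain that is not sincere and $i$ is the smallest vertex outside $\Supp B$, then $B\cup\{I(i)\}$ is again an antichain (each $\Hom(A_j,I(i))=0$ because $i\notin\Supp A_j$, and a nonzero map $I(i)\to A_j$ would have injective image, forcing the indecomposable $A_j$ to be injective) and it is sincere, since $\Supp I(i)$ contains every vertex $\geq i$, in particular every vertex missing from $\Supp B$. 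These two operations are mutually inverse, and that is the paper's entire proof. A recursion of the kind you sketch does appear in this section, but later and for a different purpose (the computation of $u(\mathbb B_n)$), not for this lemma.
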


\begin{proof} Let $\mathcal  W$ be the set of antichains without injective modules
     and $\mathcal  S$ the set of sincere antichains.
     We want to construct a bijection $\eta:\mathcal  S \to \mathcal  W$.
     Note that an element of $\mathcal  S$ contains at most one injective module,
     since the injective modules are pairwise comparable with respect to $\Hom$.
     If $A\in \mathcal  S$ contains no injective module, then let $\eta(A) = A$.
     If $A\in \mathcal  S$ contains the injective module $I(i)$, let $\eta(A)$ be obtained
     from $A$ by deleting $I(i)$ and note that $\eta(A)$ is no longer sincere (since
     all the modules $A_j$ in $\eta(A)$ satisfy $\Hom(A_j,I(i)) = 0$). It follows that
     $\eta$ is an injective map. In order to show that $\eta$ is surjective, 
     assume
     that $B$ is an antichain in $\mathcal  W$. If $B$ is sincere, then it belongs to $\mathcal  S$
     and by definition $\eta(B) = B$. If $B$ is not sincere, let $i$ be the smallest
     number such that $i$ is not in the support of $B$. Let $A$ be obtained from $B$
     by adding $I(i)$. Then clearly $A$ is sincere and $\eta(A) = B$. 
\end{proof} 
       
\begin{remark}\label{type-a}
 A similar proof applies to the linearly oriented quiver of type $\mathbb A_n$.
It yields the formula
$$
 a(\mathbb A_{n-1}) = a_n(\mathbb A_n).
$$
Also, instead of looking at antichains which do not contain any injective module, we
may consider antichains which do not contain any projective module. 
\end{remark} 

Now we are able to determine $u(\mathbb B_n)$. 
	
\begin{lemma}
$$
    u(\mathbb B_n) = a_{n-1}(\mathbb B_n) = \binom{2n-2}{n-1}.
$$
\end{lemma}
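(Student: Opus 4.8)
The plan is to interpret $u(\mathbb B_n)$, as in the preceding lemma, as the number of antichains in $\mo\Lambda$ that contain a sincere brick, and to match these bijectively with \emph{all} antichains of the smaller Dynkin algebra $\mathbb B_{n-1}$. With the orientation fixed above, vertex $1$ is a sink, so $S(1)=P(1)$ is the simple projective module; deleting vertex $1$ yields the algebra $\Lambda''$ of type $\mathbb B_{n-1}$, and the right perpendicular category $S(1)^\perp=\{X:\Hom(S(1),X)=0\}$ (here $\Ext(S(1),X)=0$ automatically, since $S(1)$ is projective) is precisely $\mo\Lambda''$, i.e.\ the modules not involving vertex $1$. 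Since the hook-formula computation of Section \ref{hook} already gives $a_{n-1}(\mathbb B_n)=\binom{2n-2}{n-1}$, and since summing $a_s(\mathbb B_{n-1})=\binom{n+s-2}{s}$ over $0\le s\le n-1$ gives $a(\mathbb B_{n-1})=\binom{2n-2}{n-1}$ as well, it suffices to construct a bijection between the antichains of $\mo\Lambda$ containing a sincere element and the antichains of $\mo\Lambda''$.

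The forward map is $A\mapsto A\setminus\{M\}$, where $M$ is the sincere element of $A$. Two facts make this well defined and force the image into $\mo\Lambda''$. First, every brick orthogonal to a sincere brick avoids vertex $1$: a sincere $M$ satisfies $\Hom(P(j),M)\ne0$ for all $j$ and, vertex $1$ being a sink, has $S(1)$ in its socle; if a brick $N$ involved vertex $1$ then $S(1)\subseteq N$, and I would show that this forces a nonzero map between $N$ and $M$ (for $N=P(j)$ this is immediate from $\Hom(P(j),M)\ne0$, and the general case I would reduce to the projective case along the socle filtration at the sink), contradicting orthogonality; hence $A\setminus\{M\}$ is an antichain of $\mathbb B_{n-1}$. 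Second, an antichain contains \emph{at most one} sincere brick, because the sincere bricks are pairwise comparable under $\Hom$: each of them meets the socle copy of $S(1)$ and therefore maps nontrivially to the sincere injective $I(1)$, so two of them cannot be orthogonal.

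For the inverse I would attach to an antichain $B$ of $\mo\Lambda''$ its canonical sincere completion $M_B$, namely the unique sincere brick orthogonal to every member of $B$, and set $B\mapsto B\cup\{M_B\}$. The natural construction of $M_B$ is a universal extension of a suitable $\Lambda''$-module by copies of the simple projective $S(1)$, exactly the device used in the proof of Proposition \ref{APR} and reflecting that $\mathbb B_n$ is the one-point coextension of $\mathbb B_{n-1}$ at vertex $1$. Proving that $M_B$ exists, is sincere, and is the \emph{only} sincere brick orthogonal to $B$ is the heart of the argument and the step I expect to be the main obstacle: one must control, uniformly in $B$, how the coextension at the sink vertex interacts with orthogonality, and check that deletion of $M_B$ recovers $B$. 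Granting this, the two maps are mutually inverse, and therefore $u(\mathbb B_n)=a(\mathbb B_{n-1})=\binom{2n-2}{n-1}=a_{n-1}(\mathbb B_n)$.
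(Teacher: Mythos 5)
Your key claim --- that each antichain $B$ of $\mo\Lambda''$ has a unique sincere completion $M_B$ --- is false, and with it the proposed bijection collapses. Uniqueness fails already for $B=\emptyset$: every one of the $n$ sincere indecomposables $X(i)=\tau^{-n+i}P(i)$, $1\le i\le n$, is vacuously orthogonal to every member of $\emptyset$, so your forward map sends the $n$ distinct antichains $\{X(1)\},\dots,\{X(n)\}$ all to $\emptyset$; it is $n$-to-one there, not injective. Existence fails as well: already in rank two, the two sincere indecomposables of $\mathbb B_2$ (quiver $1\Leftarrow 2$) are $P(2)$ and $\tau^{-1}P(1)=I(1)$, and both have $S(2)$ in their top, so both admit nonzero maps onto $S(2)$; hence the antichain $B=\{S(2)\}$ of $\Lambda''$ has no sincere completion and lies outside the image of your map. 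The structural point your plan misses is that the set of bricks orthogonal to a fixed sincere brick $X(i)$ is not all of $\mo\Lambda''$ (type $\mathbb B_{n-1}$) but a much smaller subcategory, of type $\mathbb B_{n-i}\sqcup\mathbb A_{i-2}$, in which moreover the injectives of the $\mathbb B_{n-i}$-part do not occur --- and which subcategory arises depends on $i$. So although the identity $u(\mathbb B_n)=a(\mathbb B_{n-1})$ is numerically correct, it cannot be established by deleting or adjoining a sincere element.

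This failure is exactly what the paper's proof is built around: it stratifies $u(\mathbb B_n)=\sum_{i=1}^n u_i(\mathbb B_n)$ according to which sincere module $X(i)$ the antichain contains (at most one can occur, since $\Hom(X(i),X(j))\neq 0$ for $i\ge j$), identifies the antichains containing $X(i)$ with the antichains of the perpendicular subcategory $\mathcal X_i$, and computes $u_i(\mathbb B_n)=w(\mathbb B_{n-i})\,a(\mathbb A_{i-2})=a_{n-i}(\mathbb B_{n-i})\,a_{i-1}(\mathbb A_{i-1})$, where the preceding lemma $w(\mathbb B_m)=a_m(\mathbb B_m)$ is invoked precisely because the injectives of the $\mathbb B_{n-i}$-part are absent from $\mathcal X_i$. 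The sum $\sum_i a_{i-1}(\mathbb A_{i-1})a_{n-i}(\mathbb B_{n-i})$ is then recognized as the number of support-tilting modules whose support is $\{1,\dots,n\}\setminus\{i\}$, summed over $i$, which is $a_{n-1}(\mathbb B_n)$; no global bijection with antichains of $\mathbb B_{n-1}$ is ever constructed. A smaller but genuine flaw in your write-up: your argument that an antichain contains at most one sincere brick ("each maps nontrivially to $I(1)$, so two of them cannot be orthogonal") is a non sequitur --- two modules mapping nonzero to the same injective need not admit a nonzero map between each other; the comparability of sincere bricks requires the explicit description of the modules $X(i)$, as in the paper.
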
       

\begin{proof}
Note that the sincere indecomposable
representations of $\Lambda$ are the modules
$X(i) = \tau^{-n+i}P(i)$ with $1\le i \le n$. 
The dimension vector
of $X(n)$ is $(1,\dots,1)$, whereas for $1\le i <n$,  the length of $X(n)$
is $n+i$ and its dimension vector is of the form $(1,\dots,1)+(0,\dots,0,1,\dots,1)$.
It is easy to see that $\Hom(X(i),X(j)) \neq 0$ for $i\ge j$, thus any antichain
contains at most one $X(i)$. 
Let $u_{i}(\mathbb B_n)$ be the antichains which contain $X(i)$,
thus
$$
 u(\mathbb B_n) = \sum\nolimits_{i=1}^n u_{i}(\mathbb B_n).
 $$
 Let $\mathcal  X_i$ be the set of indecomposable modules $M$ such that
 $\Hom(X(i),M) = 0 = \Hom(M,X(i))$. Thus, the antichains which contain $X(i)$
 correspond bijectively to the antichains in $\mathcal  X_i$.
 In general, the set $\mathcal  X_i$ consists of three triangles I, II, III:
 $$
 \hbox{\beginpicture
	\setcoordinatesystem units <.4cm,.4cm>
	\multiput{} at 0 0  27 9.5 /
	\plot 0 0  9 9  18 0  27 9 /
	\setdots <1mm>
	\plot  0 0  18 0 /
	\plot  9 9  27 9 /
	\setsolid
	\plot  4 4  8 0  17 9 /
	\plot  2 0  4 2  6 0 /
	\plot  11 9  13 7  15 9 /
	\plot 10 0  13  3  16 0 /
	\setshadegrid span <.5mm>
	\vshade -0.3 -.5 .3 <,z,,> 4 3.7 4.3 <z,z,,> 8 -.3 8.3
	  <z,z,,> 9 0.7 9.2 <z,z,,> 13 4.7 5.3 <z,z,,> 16.8 .7 9.2 <z,z,,> 18 -.3 9.2
	  <z,,,> 27.3 9 9.2 /
	  \put{$X(i)$} [l] at 13.5 5
	  \put{I} at 4 .7
	  \put{II} at 13 8.3
	  \put{III} at 13 1.1
	  \endpicture}
$$
  The triangle I is the wing at the vertex $\tau^{-1}P(n-i-1)$,
  the triangle II is the wing at the vertex $\tau^{-n+i-1}P(i+2)$,
  and the triangle III is the wing at the vertex $\tau^{-n+i+1}P(i-2)$.

  We also are interested in a larger triangle II$'$ which contains the triangle II as well as
  $n-i$ additional modules (all being successors of $X(i)$), namely
  the wing at the
  vertex $\tau^{-n+i}P(i+1)$.

  The full subcategory $\mathcal  X'$ of all direct sums of indecomposable modules 
in the wings I, II$'$, III is the thick subcategory with simple objects
$$
	   S(2), S(3),\dots, S(n-i+1); \quad \tau^{n-i}P(n); \quad  S(n-i+3), \dots, S(n-1).
$$
The position of these modules is indicated here by bullets:
$$
\hbox{\beginpicture
	\setcoordinatesystem units <.4cm,.4cm>
	\multiput{} at 0 0  27 9.5 /
	\plot 0 0  9 9  18 0  27 9 /
	\setdots <1mm>
	\plot  0 0  18 0 /
	\plot  9 9  27 9 /
	\setsolid
	\plot  4 4  8 0  17 9 /
	\plot  2 0  4 2  6 0 /
	\plot  11 9  14 6 /
	\setsolid
	\plot 10 0  13  3  16 0 /
	\setshadegrid span <.5mm>
	\vshade 2  0 0  <,z,,> 4 0 2 <z,,,> 6 0 0 /
	\vshade 11  9 9  <,z,,> 14 6 9 <z,,,> 17 9 9 /
	\vshade 10  0 0  <,z,,> 13 0 3 <z,,,> 16 0 0 /
	\put{$X(i)$} [l] at 13.5 5
	\put{I} at 4 .9
	\put{II$'$} at 14 8
	\put{III} at 13 1.1
	\multiput{$\bullet$} at 2 0   6 0  10 0    16 0  17 9 /
	\endpicture}
$$
(A full subcategory $\mathcal  A$ of $\mo\Lambda$ is called a {\it thick} subcategory 
provided it is
closed under kernels, cokernels and extensions (see for example \cite{[K]}). 
Note that a thick subcategory is an abelian category, and the
inclusion functor $\mathcal  A \to \mo\Lambda$ is exact.)

The category $\mathcal X'$ is of type $\mathbb B_{n-i} \sqcup \mathbb A_{i-2}$
(the $\mathbb A_{i-2}$-part is given by the triangle III, whereas
the $\mathbb B_{n-i}$-part is given by the triangles I and II$'$).
Note that the indecomposables in I and II just correspond to
the non-injective indecomposables in the $\mathbb B_{n-i}$-part.
This shows that
$$
  u_i(\mathbb B_n) = w(\mathbb B_{n-i})a(\mathbb A_{i-2}) 
  = a_{n-i}(\mathbb B_{n-i})a_{i-1}(\mathbb A_{i-1}).
$$

In the special cases $i = 1,2,n-1,n$, the same formula holds. Namely, for $i = 1$ and $i=2$,
the triangle III is empty, whereas the triangles I and II$'$ together yield a category
of type $\mathbb B_{n-i}$.
 In the cases $i=n-1$ and $i=n$, the triangles I and II are empty, whereas the triangle
 III yields a category of type $\mathbb A_{i-2}$.

 Thus we see
$$
	  u(\mathbb B_n) = \sum\nolimits_{i=1}^n u_i(\mathbb B_n) = \sum\nolimits_{i=1}^n
	   a_{i-1}(\mathbb A_{i-1})a_{n-i}(\mathbb B_{n-i}).
$$
	   But the latter expression is the recursion formula for
	   $a_{n-1}(\mathbb B_n)$, since the number of support-tilting modules $T$ with support
	   $\{1,2,\dots,n\}\setminus\{i\}$ is just $a_{i-1}(\mathbb A_{i-1})a_{n-i}(\mathbb B_{n-i})$.
\end{proof} 
	   
\subsection{The calculation of $v(\mathbb B_n)$}

\begin{lemma}
$$
         v(\mathbb B_n) = a_{n-2}(\mathbb B_{n+1}) = \binom{2n-2}{n-2}.
$$
\end{lemma}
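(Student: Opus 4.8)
The plan is to count sincere antichains without a sincere element, where $\Lambda$ has the $\mathbb{B}_n$ quiver shown above. The first task is to understand the structure of such an antichain $A = \{A_1, \dots, A_t\}$: each $A_j$ is a non-sincere brick, yet their supports must cover all of $\{1, \dots, n\}$, and they are pairwise $\Hom$-orthogonal. The strategy I would adopt mirrors the calculation of $u(\mathbb{B}_n)$: decompose the antichains according to some combinatorial invariant, express each piece through numbers already computed for smaller Dynkin types, and recognize the resulting sum as the recursion for $a_{n-2}(\mathbb{B}_{n+1})$.

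\textbf{The key geometric idea.} The right invariant to stratify by should be the ``gap'' in the support --- since no single $A_j$ is sincere but the union is, the supports of the $A_j$ must overlap in a controlled way along the linear $\mathbb{B}_n$ diagram. I would first argue that a sincere antichain with no sincere element must split, roughly, into modules supported on a left segment and modules supported on a right segment, with the two segments together covering $\{1, \dots, n\}$ and forced to overlap precisely at the junction. Concretely, I expect that specifying where the support of $A$ ``breaks'' --- say indexing by a vertex $i$ where two otherwise disjoint families of supports must meet --- partitions the count. For each such breakpoint one obtains a product of the form (number of antichains on a $\mathbb{B}$-type subcategory) times (number of antichains on an $\mathbb{A}$-type subcategory), exactly as in the previous lemma, using the thick-subcategory analysis and the wing decomposition from the $u(\mathbb{B}_n)$ proof.

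\textbf{Assembling the recursion.} Once the stratification gives $v(\mathbb{B}_n) = \sum_i (\text{product of smaller support-tilting counts})$, I would compare this sum term-by-term with the recursion formula for $a_{n-2}(\mathbb{B}_{n+1})$. Recall that the number of support-tilting modules of support-rank $s$ in type $\mathbb{B}_{n+1}$ decomposes over the choice of which vertices are omitted, each contributing a product $a_{?}(\mathbb{A})\, a_{?}(\mathbb{B})$ just as in the $u(\mathbb{B}_n)$ computation. The goal is to exhibit a support-preserving bijection, or at least a term-by-term equality of the two summations, showing that sincere antichains of $\mathbb{B}_n$ without a sincere element correspond to support-tilting modules of support-rank $n-2$ over $\mathbb{B}_{n+1}$. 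The final value $\binom{2n-2}{n-2}$ then follows from the $\mathbb{B}$-formula $a_s(\mathbb{B}_{n+1}) = \binom{n+s}{s}$ evaluated at $s = n-2$, which gives $\binom{2n-2}{n-2}$.

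\textbf{The main obstacle.} The hard part will be justifying the precise combinatorial structure of sincere antichains lacking a sincere element --- in particular, proving that their supports are forced to decompose into exactly two overlapping intervals and identifying the correct indexing of the breakpoint. The orthogonality condition $\Hom(A_j, A_k) = 0$ in both directions, combined with the directedness of the $\mathbb{B}_n$ quiver and the Auslander--Reiten combinatorics recorded in Proposition~\ref{hammock}, should pin this down, but verifying that the induced subcategories are genuinely of the claimed types $\mathbb{B}$ and $\mathbb{A}$ (and handling the boundary cases where one family is empty) is where the real work lies. Once that structural lemma is in place, matching the sum against the $\mathbb{B}_{n+1}$ recursion is a formal manipulation.
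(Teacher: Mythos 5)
Your overall strategy---stratify the sincere antichains without sincere elements by a breakpoint, obtain products of $\mathbb{A}$- and $\mathbb{B}$-type counts, and match the sum against a known recursion---is the right one, and it is essentially how the paper argues. But the two places where you defer the work are exactly where your plan either misses the key idea or would fail. First, the structural lemma you call ``the main obstacle'' has a sharper and simpler answer than the one you guess: the supports do \emph{not} form two overlapping intervals meeting at a junction. The paper's argument is that the antichain member $A_1$ whose support contains vertex $1$ is not sincere, hence its support avoids vertex $n$, hence $A_1$ lives on the linearly oriented $\mathbb{A}_{n-1}$-subquiver; being indecomposable with support an interval $[1,i]$, it must be the indecomposable projective $P(i)$ for a unique $i<n$ (and an antichain contains at most one indecomposable projective, so $i$ is a well-defined invariant). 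Orthogonality to $P(i)$ then forces every other member to have support inside $[2,i-1]$ or inside $[i+1,n]$---two \emph{disjoint} regions, with the left members forming an arbitrary antichain of type $\mathbb{A}_{i-2}$ nested strictly inside the support of $P(i)$, and the right members forming a sincere antichain of type $\mathbb{B}_{n-i}$. This gives $v(\mathbb{B}_n)=\sum_{i=1}^{n-1}a_{i-1}(\mathbb{A}_{i-1})\,a_{n-i}(\mathbb{B}_{n-i})$.

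Second, your plan to finish by a ``term-by-term'' comparison with the recursion for $a_{n-2}(\mathbb{B}_{n+1})$ would not go through as stated: the vertex-omission recursion for $a_{n-2}(\mathbb{B}_{n+1})$ runs over \emph{pairs} of omitted vertices and is a double sum with three factors $a_{\cdot}(\mathbb{A})\,a_{\cdot}(\mathbb{A})\,a_{\cdot}(\mathbb{B})$, so it cannot be matched term by term with the single sum above. The paper instead recognizes the single sum as the recursion for $a_{n-1}(\mathbb{B}_n)$ (the same identity already used to compute $u(\mathbb{B}_n)$), truncated by omitting the $i=n$ term, whose value is the Catalan number $a_{n-1}(\mathbb{A}_{n-1})=\frac1n\binom{2n-2}{n-1}$. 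Since $a_{n-1}(\mathbb{B}_n)=\binom{2n-2}{n-1}$ is known by the induction hypothesis (hook formula), one gets $v(\mathbb{B}_n)=\binom{2n-2}{n-1}-\frac1n\binom{2n-2}{n-1}=\binom{2n-2}{n-2}$; the equality with $a_{n-2}(\mathbb{B}_{n+1})$ is then read off numerically from $a_s(\mathbb{B}_{n+1})=\binom{n+s}{s}$, not established by any bijection. So to repair your proof you should (i) prove the $P(i)$-structure of the antichains, and (ii) aim the resulting sum at the recursion for $a_{n-1}(\mathbb{B}_n)$, not at one for $a_{n-2}(\mathbb{B}_{n+1})$.
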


\begin{proof} 
Let $\mathcal  V$ be the set
 of sincere antichains of $\Lambda$-modules without
 a sincere element.
 Let $A = (A_1,\dots, A_r)$ be in $\mathcal  V$. Since $A$ is sincere,
we may assume that $\Hom(P(1),A_1)\neq 0$.
Since
$A_1$ is not sincere, we must have
$\Hom(P(n),A_1) = 0$, thus $A_1$ is a representation of a
Dynkin algebra of type $\mathbb A_{n-1}$ and actually an indecomposable
projective representation (also as a $\Lambda$-module), thus
$A_1 = P(i)$ for some $i$ with $1\le i < n$.
Since an antichain can contain only one indecomposable projective module, we see
that $A_1$ is uniquely determined. 

We let $\mathcal  V_i$ denote the sincere antichains $A$ such that $A_1 = P(i)$.
 For $2\le j \le r$, we have
 $\Hom(P(i),A_j) = \Hom(A_1,A_j) = 0$. It follows that
 $(A_2,\dots, A_r)$ is an antichain with support in $[1,i-1]\cup[i+1,n]$.
  Altogether, we see that any element of $A$ has support either in $[1,i]$
  or in $[i+1,n]$.
 The elements of $A$ with support in $[1,i]$ but different from $A_1$
 form  an arbitrary antichain
 with support in $[2,i-1]$, thus the number of elements is $a(\mathbb A_{i-2})$, 
at least  if $i\ge 2$. Note that $a(\mathbb A_{i-2}) = a_{i-1}(\mathbb A_{i-1})$.

 The elements of $A$ with support in $[i+1,n]$ form a sincere antichain
 for $\mathbb B_{n-i}$. Thus the number of such antichains is $a_{n-i}(\mathbb B_{n-i})$.
  This shows that for $i\ge 2$, the set $\mathcal  V_i$ has cardinality
  $a_{i-1}(\mathbb A_{i-1})a_{n-i}(\mathbb B_{n-i})$. This formula holds true also for
  $i=1$, since the number of elements of $\mathcal  V_1$ is  $a_{n-1}(\mathbb B_{n-1})$ and
  $a_0(\mathbb A_0) = 1$. Thus we see that 

\begin{eqnarray*}
 v(\mathbb B_n) &=& \sum\nolimits_{i=1}^{n-1} a_{i-1}(\mathbb A_{i-1})a_{n-i}(\mathbb B_{n-i}) \cr
           &=& -a_{n-1}(\mathbb A_{n-1})
                     +\sum\nolimits_{i=1}^{n} a_{i-1}(\mathbb A_{i-1})a_{n-i}(\mathbb B_{n-i}) \cr
           &=& -\frac1n\binom{2n-2}{n-1} + \binom{2n-2}{n-1}  = \binom{2n-2}{n-2}.
\end{eqnarray*}
Altogether we see
$$
 u(\mathbb B_n) + v(\mathbb B_n) = \binom{2n-2}{n-1} + \binom{2n-2}{n-2}=
 \binom{2n-1}{n-1}.
$$
\end{proof} 	

\begin{remark}
	 The calculation of $v(\mathbb B_n)$ shows the following
		 relationship between the cases $\mathbb A$ and $\mathbb B$:
$$
     a_{n-1}(\mathbb B_n) =   a_{n-2}(\mathbb B_{n+1}) +   a_{n-1}(\mathbb A_{n-1}).
$$
\end{remark}

\section{Support-tilting modules: the hook formula}\label{hook}
		          
\subsection{The hook formula}

\begin{proposition}\label{hook-formula} 
Let $\Delta = \mathbb A, \mathbb B, \mathbb D, \mathbb E$. Then
$$
 a_s(\Delta_n) = a_s(\Delta_{n-1}) + a_{s-1}(\Delta_n)
$$
 for all $n\ge m$ and $1\le s \le n-c$, where $m = 1,2,3,4$ and
 $c=0,1,2,3$ for $\Delta = \mathbb A, \mathbb B,
 \mathbb D, \mathbb E$, respectively.
\end{proposition}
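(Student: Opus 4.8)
The recursion $a_s(\Delta_n) = a_s(\Delta_{n-1}) + a_{s-1}(\Delta_n)$ is the defining recursion of the triangles S~\ref{triangle1-S}, S~\ref{triangle2-S}, S~\ref{triangle3-S} once one applies the shearing $a_s(n) = z_s(n+s-1)$: indeed $z_s(t) = z_{s-1}(t-1) + z_s(t-1)$ translates into exactly this formula under that substitution. So at the level of the closed-form numbers, the claim is a purely combinatorial identity among binomial coefficients (Pascal's rule for $\mathbb B$, and its Catalan/Lucas variants for $\mathbb A$ and $\mathbb D$), valid in the stated ranges. Thus one route is simply to verify the identity algebraically, case by case. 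But the more illuminating proof, in keeping with the categorification theme of the paper, should be bijective, exhibiting a decomposition of the support-tilting modules that realizes the recursion.

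**The plan I would follow.** I interpret $a_s(\Delta_n)$ as the number of antichains of support-rank $s$ in $\mo\Lambda$, where $\Lambda$ is a Dynkin algebra of type $\Delta_n$, as established in Proposition~\ref{hammock}. Fix a source (or sink) vertex $\omega$ of the quiver whose deletion leaves a Dynkin diagram of type $\Delta_{n-1}$ — this is where the constants $m$ and $c$ enter, since such a vertex exists and behaves well only for $n \ge m$, and the resulting smaller diagram stays within the same series only in the stated $s$-range. I would then split the antichains of support-rank $s$ into two classes: those whose support does not contain $\omega$, and those whose support does contain $\omega$. The first class consists of antichains living entirely in the thick subcategory supported away from $\omega$, which is of type $\Delta_{n-1}$, contributing $a_s(\Delta_{n-1})$. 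The delicate part is the second class, which I claim contributes $a_{s-1}(\Delta_n)$.

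**The key step and expected obstacle.** For antichains whose support includes $\omega$, the goal is a bijection onto antichains in $\mo\Lambda$ of support-rank $s-1$ (of the \emph{same} type $\Delta_n$). The natural candidate is a hook-removal map: given such an antichain, one deletes (or contracts) the unique module in the antichain whose support uses the vertex $\omega$, using that the modules meeting $\omega$ are linearly ordered with respect to $\Hom$ so that at most one can appear — exactly the orthogonality argument already used in the Lemma proving $w(\mathbb B_n) = a_n(\mathbb B_n)$ and in the computation of $u(\mathbb B_n)$. The ``hook'' terminology suggests that the modules to be removed trace out a hook-shaped region in the $\tau$-orbit picture, and that deleting the corner of this hook sets up the bijection. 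The main obstacle is verifying that this deletion lands among genuine antichains of support-rank exactly $s-1$ and that it is reversible (one must canonically reconstruct the removed module from the smaller antichain, as in the surjectivity argument of the earlier Lemma, by choosing the smallest missing vertex and building the unique appropriate indecomposable through $\omega$). Checking that this works uniformly across $\mathbb A, \mathbb B, \mathbb D, \mathbb E$ — and that the boundary constraint $s \le n-c$ is exactly what guarantees the reconstruction stays injective and within the series — is where the real work lies; the type $\mathbb D$ and $\mathbb E$ cases will require the most care because of the branch vertex, and I expect the shearing identity to be cleanest verified by reducing to the already-established triangle recursions rather than by a single uniform bijection.
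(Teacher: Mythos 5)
Your overall decomposition matches the paper's: delete the end vertex of the long $\mathbb A$-arm (vertex $1$ in the paper's labelling), split the objects of support-rank $s$ according to whether that vertex lies in the support, and biject the second class with objects of support-rank $s-1$. The genuine gap is your choice to run this with antichains. For an antichain the support-rank is \emph{not} the number of its elements, and your hook-removal map simply does not land in antichains of support-rank $s-1$: if $A$ is an antichain of rank $s\le n-c$ whose support contains vertex $1$, then it does contain a unique member $X=[1,v]$ through vertex $1$ (your linear-ordering argument is correct, since the rank bound forces such members to be thin interval modules on the arm), but deleting $X$ removes from the support all vertices of $[1,v]$ not covered by the other members, so the rank can drop by much more than $1$. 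Concretely, the singleton antichain $\{[1,v]\}$ has support-rank $v$, and deleting its only member leaves the empty antichain of rank $0$; already for $\Delta=\mathbb A_n$ and $s=v\ge 2$ the map fails. You flagged exactly this verification as ``where the real work lies,'' but it is not a verification that can succeed --- it is the point where the antichain model breaks down.

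The paper instead works with basic support-tilting modules, precisely because for these the number of indecomposable direct summands equals the support-rank. Given $T$ of rank $s$ with $\Hom(P(1),T)\neq 0$, it chooses a summand $X=[1,v]$ with $\Hom(P(1),X)\neq 0$ of \emph{largest} length (there may be several summands through vertex $1$, since tilting modules are only $\Ext$-orthogonal, not $\Hom$-orthogonal --- another feature your antichain picture misses), uses rigidity $\Ext(T,T)=0$ to show that $\Supp T$ is the disjoint union of $\{1,\dots,v\}$ and a set admitting no arrow into $v$, and observes that the summands supported in $\{1,\dots,v\}$ form a tilting module of the $\mathbb A_v$-quiver with $X$ its projective-injective summand. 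Deleting $X$ therefore leaves a support-tilting module with exactly $s-1$ summands, hence of rank $s-1$; and $T$ is recovered from $T'=\alpha(T)$ by adding back $[1,n-c]$ (when only one arm vertex is missing from $\Supp T'$, i.e.\ $s=n-c$) or $[1,j-1]$ (when $i<j$ are the two smallest missing arm vertices). Finally, your fallback --- verifying the recursion algebraically on the closed forms, or ``reducing to the already-established triangle recursions'' --- is circular: in the paper the closed formulas for $a_s(\Delta_n)$ with $s<n$ are \emph{consequences} of the hook formula together with the diagonal values $a_n(\Delta_n)$; they are not independently available inputs at this stage.
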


Here we use the convention that $\mathbb B_1 = \mathbb A_1,
\mathbb D_2 = \mathbb A_1\sqcup \mathbb A_1,
\mathbb E_3 = \mathbb A_2\sqcup \mathbb A_1, \mathbb E_4 = \mathbb A_4, \mathbb E_5 = \mathbb D_5$.
In the triangles \ref{triangle1}, \ref{triangle2}, \ref{triangle3}, as well as in 
Section \ref{exceptional}, this equality concerns the following kind
of hooks:
$$
\hbox{\beginpicture
	\setcoordinatesystem units <.9cm,.9cm>
	\multiput{} at 0 0  3 3  /
	\plot 0 0  0 3  3 0 /
	\plot 0.7 0.7  1.3 0.7  1.3 1.3  1 1.3  1 1  0.7 1  0.7 0.7 /
	\multiput{$\bullet$} at 0.85 0.85  1.15 1.15 /
	\put{$\circ$} at 1.15 0.85
	\endpicture}
$$
The hook formula asserts that
the sum of the values at the positions marked by bullets is the value at the
position marked by the circle.
	 
 The various assertions concern the following general situation: up to the choice of
  an orientation, we deal with an artin algebra $\Lambda$ with
  the following valued quiver with $n$ vertices:
$$
	  \hbox{\beginpicture
		\setcoordinatesystem units <1cm,1cm>
		\multiput{} at 0 1  8 -1 /
		\multiput{$\circ$} at 0 0  1 0  2 0 4 0  5 0 /
		\put{$\cdots$} at 3 0
		\arr{0.8 0}{0.2 0}
		\arr{1.8 0}{1.2 0}
		\arr{2.5 0}{2.2 0}
		\arr{4.8 0}{4.2 0}
		\plot 3.8 0  3.5 0 /
		\put{$\ssize 1$} at 0 -.3
		\put{$\ssize 2$} at 1 -.3
		\put{$\ssize 3$} at 2 -.3
		\put{$\ssize {n-c-1}$} at 4 -.3
		\put{$\ssize {n-c}$} at 5 -.3
		\arr{5.6 0.3}{5.2 0.1}
		\arr{5.6 -.3}{5.2 -.1}
		\setdots <.5mm>
		\setquadratic
		\plot 5.4 0  6 .5  7 .5  7.5 0  7 -.5  6 -.5  5.4 0 /
		\endpicture}
$$
on the left, we have a quiver of type $\mathbb A_{n-c}$ with arrows $i \leftarrow i\!+\!1$.
The remaining $c$ vertices are in the dotted
``cloud'' to the right. All arrows between the cloud
and the $\mathbb A_{n-c}$--quiver end in the vertex $n-c$. 
We let $Q'$ denote the valued quiver
obtained by deleting the vertex $1$ and the arrow ending in $1$; let $\Lambda'$ be
the corresponding factor algebra of $\Lambda$. Here are the cases we are interested in.
		   
$$
\hbox{\beginpicture
	\setcoordinatesystem units <1cm,1cm>
	\put{\beginpicture
	\multiput{$\circ$} at 0 0  1 0  2 0  4 0  5 0 /
	\put{$\cdots$} at 3 0
	\arr{0.8 0}{0.2 0}
	\arr{1.8 0}{1.2 0}
	\arr{2.5 0}{2.2 0}
	\arr{4.8 0}{4.2 0}
	\plot 3.8 0  3.5 0 /
	\put{$\ssize 1$} at 0 -.3
	\put{$\ssize 2$} at 1 -.3
	\put{$\ssize 3$} at 2 -.3
	\put{$\ssize {n-1}$} at 4 -.3
	\put{$\ssize {n}$} at 5 -.3
	
	\setdots <1mm>
	\setquadratic
	\plot 5.2 0.1  6 .2  7 .2  7.5 0  7 -.2  6 -.2  5.2 -.1 /
	\put{$\mathbb A_n$} at -1 0
	\put{$c = 0$} at 9 0
	\endpicture} at 0 1

	\put{\beginpicture
	\multiput{$\circ$} at 0 0  1 0  2 0 4 0  5 0 /
	\put{$\cdots$} at 3 0
	\arr{0.8 0}{0.2 0}
	\arr{1.8 0}{1.2 0}
	\arr{2.5 0}{2.2 0}
	\arr{4.8 0}{4.2 0}
	\plot 3.8 0  3.5 0 /
	\put{$\ssize 1$} at 0 -.3
	\put{$\ssize 2$} at 1 -.3
	\put{$\ssize 3$} at 2 -.3
	\put{$\ssize {n-2}$} at 4 -.3
	\put{$\ssize {n-1}$} at 5 -.3
	\plot 5.8 0.03  5.2 0.03 /
	\plot 5.8 -.03  5.2 -.03 /
	\plot 5.3 0.1  5.1 0  5.3 -.1 /
	\plot 5.4 0.1  5.6 0  5.4 -.1 /
	\multiput{$\circ$} at 6 0 /
	\setdots <1mm>
	\setquadratic
	\plot 5.2 0.1  6 .2  7 .2  7.5 0  7 -.2  6 -.2  5.2 -.1 /
	\put{$\mathbb B_n$} at -1 0
	\put{$c = 1$} at 9 0

	\endpicture} at 0 0
	\put{\beginpicture
	\multiput{$\circ$} at 0 0  1 0  2 0 4 0  5 0 /
	\put{$\cdots$} at 3 0
	\arr{0.8 0}{0.2 0}
	\arr{1.8 0}{1.2 0}
	\arr{2.5 0}{2.2 0}
	\arr{4.8 0}{4.2 0}
	\plot 3.8 0  3.5 0 /
	\put{$\ssize 1$} at 0 -.3
	\put{$\ssize 2$} at 1 -.3
	\put{$\ssize 3$} at 2 -.3
	\put{$\ssize {n-3}$} at 4 -.3
	\put{$\ssize {n-2}$} at 4.9 -.3
	\arr{5.8 0.5}{5.2 0.1}
	\arr{5.8 -.5}{5.2 -.1}
	\multiput{$\circ$} at 6 .5  6 -.5 /
	\setdots <1mm>
	\setquadratic
	\plot 5.2 0.1  6 .8  7 .8  7.5 0  7 -.8  6 -.8  5.2 -.1 /
	\put{$\mathbb D_n$} at -1 0
	\put{$c = 2$} at 9 0
	\endpicture} at 0 -1.5

	\put{\beginpicture
	\multiput{$\circ$} at 0 0  1 0  2 0 4 0  5 0 /
	\put{$\cdots$} at 3 0
	\arr{0.8 0}{0.2 0}
	\arr{1.8 0}{1.2 0}
	\arr{2.5 0}{2.2 0}
	\arr{4.8 0}{4.2 0}
	\plot 3.8 0  3.5 0 /
	\put{$\ssize 1$} at 0 -.3
	\put{$\ssize 2$} at 1 -.3
	\put{$\ssize 3$} at 2 -.3
	\put{$\ssize {n-4}$} at 4 -.3
	\put{$\ssize {n-3}$} at 4.9 -.3
	\arr{5.8 0.5}{5.2 0.1}
	\arr{5.8 -.5}{5.2 -.1}
	\arr{6.8 -.5}{6.2 -.5}
	\multiput{$\circ$} at 6 .5  6 -.5  7 -.5 /
	\setdots <1mm>
	\setquadratic
	\plot 5.2 0.1  6 .8  7 .8  7.5 0  7 -.8  6 -.8  5.2 -.1 /
	\put{} at -1 0
	\put{$\mathbb E_n$} at -1 0
	\put{$c = 3$} at 9 0

	\endpicture} at 0 -3.5

        \put{} at 0 -4.7
	\endpicture}
$$
		
\begin{lemma} Let $1\le s \le n-c$. Then
$$
 a_s(\Lambda) = a_s(\Lambda') + a_{s-1}(\Lambda).
 $$
 \end{lemma}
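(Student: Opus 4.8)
The plan is to count antichains throughout. By Proposition \ref{hammock} I may read $a_s(\Lambda)$ as the number of antichains in $\mo\Lambda$ of support-rank $s$, where the support of an antichain is the union of the supports of its members and $\Supp(M)=\{i:\Hom(P(i),M)\neq 0\}$. Vertex $1$ is the leaf of the linear arm carrying the arrow $2\to 1$, so $S(1)=P(1)$ is simple projective and $\Hom(S(1),M)=M_1$; hence $1\in\Supp(M)$ exactly when $\Hom(S(1),M)\neq 0$. Since $S(1)$ is projective, $\Ext(S(1),-)=0$, so the right perpendicular category $S(1)^{\perp}$ equals $\{M:M_1=0\}=\mo\Lambda'$, and an antichain of $\mo\Lambda$ lies in $\mo\Lambda'$ iff vertex $1$ is not in its support. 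Sorting the antichains of support-rank $s$ by whether $1$ belongs to their support therefore gives $a_s(\Lambda)=a_s(\Lambda')+N_s$, where $N_s$ counts those with $1\in\Supp$, and the whole problem reduces to the identity $N_s=a_{s-1}(\Lambda)$.

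Next I would isolate, inside an antichain $A$ with $1\in\Supp$, the member responsible for vertex $1$. Because $S(1)=P(1)$ is simple, $1\in\Supp(M)$ means precisely that $S(1)$ is a submodule of $M$; using that vertex $1$ is a leaf one checks that the indecomposables containing $S(1)$ as a submodule form a nested family (along the arm $S(1)\subset[1,2]\subset\cdots$), hence are pairwise non-orthogonal. Thus $A$ has a \emph{unique} member $A_0$ with $1\in\Supp(A_0)$. When $A_0=S(1)$ the matching is immediate: $S(1)$ is orthogonal to every object of $\mo\Lambda'=S(1)^{\perp}$, so adjoining $S(1)$ is a bijection between the antichains of $\mo\Lambda'$ of support-rank $s-1$ and the antichains of $\mo\Lambda$ of support-rank $s$ having $S(1)$ as a member; these latter are exactly the support-rank-$(s-1)$ antichains that avoid vertex $1$.

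It remains to match the antichains counted by $N_s$ whose vertex-$1$ member $A_0$ is \emph{larger} than $S(1)$ against the antichains of support-rank $s-1$ that \emph{do} contain vertex $1$, and this is the heart of the lemma. The natural idea is to slide $A_0$ down the nested family, replacing it by a smaller member while leaving the remaining members (all lying in $\mo\Lambda'$, and already orthogonal to $A_0$) untouched. The subtlety, which is the main obstacle, is that the obvious one-step reduction need not work: the next smaller module may fail to be orthogonal to the rest of the antichain (for example in $\mathbb A_3$ the antichain $\{S(2),[1,3]\}$ must slide $[1,3]$ past $[1,2]$, which meets $S(2)$, directly down to $S(1)$). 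One must therefore slide to the largest smaller member of the family that is orthogonal to every remaining element, and then prove that this drops the support-rank by exactly one and produces a genuine antichain. Showing that this sliding map is well defined, rank-exact and bijective — with inverse given by sliding maximally upward — is where the real work lies; the reflection functor at the sink $1$ (the device already used for Proposition \ref{APR}) is the natural tool for making the upward and downward passages canonical, and the hypothesis $s\le n-c$ should enter exactly to keep the nested family and the rank bookkeeping governed by the linear arm rather than by the cloud.

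Granting this crux bijection, I would conclude by assembling the pieces. The antichains counted by $N_s$ split into those containing $S(1)$ and those whose vertex-$1$ member is larger; under the two bijections above these correspond respectively to the support-rank-$(s-1)$ antichains avoiding vertex $1$ and to those containing it. Their union is the set of all antichains of support-rank $s-1$, so $N_s=a_{s-1}(\Lambda)$, and therefore $a_s(\Lambda)=a_s(\Lambda')+a_{s-1}(\Lambda)$, as claimed.
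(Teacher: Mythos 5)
Your reduction from support-tilting modules to antichains is legitimate (the paper itself uses the antichain interpretation in Proposition \ref{hammock} and in Section \ref{tilting}), and the skeleton --- split rank-$s$ antichains by whether vertex $1$ lies in their support, then split the latter class by whether the vertex-$1$ member is $S(1)$ or longer --- is a correct one; the $S(1)$-case is also handled correctly. But the proposal has a genuine gap, and you say so yourself: the bijection between rank-$s$ antichains whose vertex-$1$ member properly contains $S(1)$ and rank-$(s{-}1)$ antichains whose support contains vertex $1$ is only described as a programme (``Granting this crux bijection\dots''). Proving that the downward slide is well defined, drops the support-rank by exactly one, and is bijective \emph{is} the content of the lemma; deferring it means nothing has been proved. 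Moreover, one auxiliary claim you do assert is false: the indecomposables containing $S(1)$ as a submodule do \emph{not} form a nested family, nor are they pairwise non-orthogonal. In $\mathbb{D}_4$, with arm $1\leftarrow 2$ and cloud vertices $3,4$ (arrows $3\to2$, $4\to2$), the indecomposables with dimension vectors $(1,1,1,0)$ and $(1,1,0,1)$ both contain $S(1)$ as a submodule and are Hom-orthogonal, so an antichain can contain two members through vertex $1$. Uniqueness of the vertex-$1$ member does hold in your situation, but only because the bound $s\le n-c$ forces any member through vertex $1$ to have its (connected) support inside the arm, hence to be a thin interval $[1,v]$; your justification (``vertex $1$ is a leaf'') never invokes this bound and is wrong as stated.

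A second concrete error: the inverse of your sliding map is not the \emph{maximal} upward slide. Take $\mathbb{A}_3$ with $s=2$ and the rank-$1$ antichain $\{S(1)\}$: the maximal upward slide (the rest being empty) produces $\{[1,3]\}$, of rank $3$, not $2$. The correct inverse slides $[1,u]$ up to the \emph{smallest} $v>u$ outside the support of the remaining members, giving $\{[1,2]\}$ here; proving that the maximal-down and minimal-up slides are mutually inverse and rank-exact is exactly the work you left out (it can be done, using that no other member's support may contain the endpoint $v$, that members meeting $[2,v-1]$ are intervals, and that $s\le n-c$ keeps the upward slide on the arm). For comparison, the paper's proof never touches antichains: it works with support-tilting modules directly, writes each rigid $T$ with $\Hom(P(1),T)\neq 0$ as $T=X\oplus T'$ with $X=[1,v]$ the longest summand through vertex $1$, uses $\Ext$-vanishing (not Hom-orthogonality) to show the support of $T$ splits off $\{1,\dots,v\}$, and recovers $X$ from $T'$ by an explicit two-case rule. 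Your route is genuinely different and completable, but the completion, not the reduction, is the substance of the proof.
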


\begin{proof}  The support-tilting modules $T$ for $\Lambda$ with $1$ not in the support are just
 the support-tilting modules for $\Lambda'$. 
Let $\mathcal  S_s(\Lambda;1)$ be the set of the basic
support-tilting $\Lambda$-modules $T$ with support-rank $s$ and $\Hom(P(1),T) \neq 0$.
Let $\mathcal  S_{s-1}(\Lambda)$ be the set of basic
support-tilting $\Lambda$-modules $T$ with support-rank $s-1$.
We construct a bijection
$$
  \alpha:\mathcal  S_s(\Lambda;1) \longrightarrow \mathcal  S_{s-1}(\Lambda).
$$
This will establish the formula.

Let $X$ be an indecomposable representation with support-rank $s \le n-c$ and
$\Hom(P(1),X) \neq 0$. Then the support of $X$ is contained in the $\mathbb A_{n-c}$-subquiver,
so $X$ is thin and its support is an interval of the form
$[1,v]$ with $1\le v \le n-c$ (a module is said to be {\it thin}
provided the composition factors are pairwise non-isomorphic; in our setting thin
indecomposable modules are  uniquely determined by the support, thus we may just write
$X = [1,v]$).

Let $T$ be a module in $\mathcal  S_s(\Lambda;1)$.
 At least one of the indecomposable
 direct summand of $T$, say $X$, satisfies 
$\Hom(P(1),X)\neq 0$ and we choose $X = [1,v]$ of largest possible length.
 We claim that $\Hom(P(w),T) = 0$ for any arrow $v \leftarrow w$.
 Assume, to the contrary, that there is an indecomposable direct summand $Y$ of $T$
  with $\Hom(P(w),Y) \neq 0$. 
The maximality of $X$ shows that $\Hom(P(1),Y) = 0$. But then $\Ext(Y,X) \neq 0$
  contradicts the fact that $T$ has no self-extensions. (Namely, if the support of
  $X$ and $Y$ is disjoint, then the arrow $v\leftarrow w$ yields directly a non-trivial
  extension of $X$ by $Y$; if the support of $X$ and $Y$ is not disjoint, then there
  is a proper non-zero factor module of $X$ which is a proper submodule of $Y$,
  thus there is a non-zero map $X \to Y$ which is neither injective nor
  surjective ---
  again we obtain a non-trivial extension of $X$ by $Y$.) Thus the support of $T$
  is the disjoint union of the set $\{1,2,\dots,v\}$ and a set $S''$ which does not
  contain a vertex $w$ with an arrow $v\leftarrow w$.

  The indecomposable direct summands of $T$ with support in
  $\{1,2,\dots,v\}$ yield a tilting module for this $\mathbb A_v$-quiver, and
  $X$ is the indecomposable projective-injective representation of this
  $\mathbb A_v$-quiver. Deleting $X$ from this tilting module, we obtain a support-tilting
  representation of $\mathbb A_v$ with support-rank $v-1$.

  Thus if we write $T = X\oplus T'$, then $T'$ is a support-tilting $\Lambda$-module
  with support-rank $s-1$ (namely, it is the direct sum of a support-tilting module
  with support properly contained in $\{1,2,\dots,v\}$ and a support-tilting
  module with support $S''$). We define $\alpha(T) = T'$; this yields the map
$$
   \alpha:\mathcal  S_s(\Lambda;1) \longrightarrow \mathcal  S_{s-1}(\Lambda)
$$
   we are looking for. It remains to be shown that $\alpha$ is surjective and that we can recover $T$
   from $\alpha(T)$.

   Thus, let $T'$ be in $\mathcal  S_{s-1}(\Lambda)$. Then there are at least $c+1$
   vertices outside of the support of $T'$.

   Case 1: These are the vertices in the cloud
   and precisely one additional vertex, say $i$ (with $1\le i \le n-c$). Note that in this case
   $s = n-c$. Let $T = T'\oplus [1,n-c]$. Since $T'$ is a support-tilting module
   of $\mathbb A_{n-c}$ with support-rank $n-c-1$ and $[1,n-c]$ is the indecomposable
   projective-injective representation of $\mathbb A_{n-c}$, we see that $T = T'\oplus[1,n-c]$
   is a tilting module for $\mathbb A_{n-c}$.

   Case 2: At least two vertices between $1$ and $n-c$ do not belong to $\Supp T'$, say let
   $i<j$ be the smallest such numbers. Then let $T = T'\oplus [1,j-1]$. 
\end{proof}

\subsection{The modified hook formula}

\begin{proposition}\label{modified}
\begin{eqnarray*}
 a_{n-1}(\mathbb D_n) &=& a_{n-1}(\mathbb D_{n-1}) + a_{n-2}(\mathbb D_n) + a_{n-2}(\mathbb A_{n-2}), \\
  a_{n-2}(\mathbb E_n) &=& a_{n-2}(\mathbb E_{n-1}) + a_{n-3}(\mathbb E_n) + a_{n-3}(\mathbb A_{n-3}).
\end{eqnarray*}

\end{proposition}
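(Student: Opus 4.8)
The plan is to treat the two identities in parallel and, in each, to run the same deletion-of-a-source argument used in the Lemma preceding this proposition, but now at the \emph{boundary} value $s=n-c+1$ (so $s=n-1$ for $\mathbb D$ with $c=2$, and $s=n-2$ for $\mathbb E$ with $c=3$), where that argument acquires a correction term. I will present type $\mathbb D$ in full; type $\mathbb E$ consists of the three algebras $\mathbb E_6,\mathbb E_7,\mathbb E_8$ and is handled by identical bookkeeping (or, failing that, by direct inspection of Section~\ref{exceptional}), the only structural input being that its branch vertex has exactly two cloud-neighbours. Fix $\Lambda$ of type $\mathbb D_n$ with the quiver used for the hook formula: a linearly oriented $\mathbb A_{n-2}$ on the vertices $1,\dots,n-2$ together with two further vertices $p,q$ (the cloud), each joined to the branch vertex $n-2$. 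As in the Lemma, let $\Lambda'=\mathbb D_{n-1}$ be the restriction obtained by deleting the source $1$, and split the support-tilting modules of support-rank $n-1$ according to whether $1$ lies in the support. Those for which $1$ is not in the support are exactly the support-tilting $\Lambda'$-modules of support-rank $n-1$, contributing $a_{n-1}(\mathbb D_{n-1})$. Thus everything reduces to showing that the set $\mathcal S$ of support-tilting $\Lambda$-modules $T$ of support-rank $n-1$ with $\Hom(P(1),T)\neq 0$ satisfies $|\mathcal S| = a_{n-2}(\mathbb D_n)+a_{n-2}(\mathbb A_{n-2})$.

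To count $\mathcal S$ I would re-use the map $\alpha$ of the Lemma: given $T\in\mathcal S$, let $X$ be the indecomposable summand with $1$ in its support of greatest length, and set $\alpha(T)=T\ominus X$. First I would check that $X$ is unique and that $\alpha(T)$ again has support-rank exactly $n-2$, so that $\alpha$ maps $\mathcal S$ into the set $\mathcal S_{n-2}(\Lambda)$ of support-tilting modules of rank $n-2$; this is the same local analysis (vanishing of self-extensions forces the support to split off at a neighbour of the right-hand end of $X$) that proves the Lemma. The essential new point is that for $s\le n-2$ the rank bound forces $X$ to be a thin interval $[1,v]$ inside the $\mathbb A_{n-2}$-part, whereas at $s=n-1$ the summand $X$ may instead \emph{cross the branch}, i.e.\ have support $\{1,\dots,n-2,p\}$ or $\{1,\dots,n-2,q\}$. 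These branch-crossing summands are precisely what the ordinary hook formula cannot see, and they are the source of the correction.

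The heart of the proof is then the fibre analysis of $\alpha$. I would show that $\alpha$ is surjective and bijective over every $T'\in\mathcal S_{n-2}(\Lambda)$ whose support is \emph{not} exactly the linear part $\{1,\dots,n-2\}$, while over each $T'$ supported precisely on $\{1,\dots,n-2\}$ — that is, over each tilting module of $\mathbb A_{n-2}$, of which there are $a_{n-2}(\mathbb A_{n-2})$ — the fibre has exactly two elements. The two preimages are obtained by adjoining to $T'$ the projective-injective module of the $\mathbb A_{n-1}$ formed by attaching, across the branch, one of the two cloud vertices $p$ or $q$; since $p$ and $q$ are the only cloud-neighbours of the branch vertex, there are exactly two such extensions, each visibly support-tilting of rank $n-1$, and there are no further preimages because an indecomposable carrying vertex $1$ and a cloud vertex must carry the entire path through the branch. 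Counting preimages gives $|\mathcal S| = (a_{n-2}(\mathbb D_n)-a_{n-2}(\mathbb A_{n-2}))\cdot 1 + a_{n-2}(\mathbb A_{n-2})\cdot 2 = a_{n-2}(\mathbb D_n)+a_{n-2}(\mathbb A_{n-2})$, which together with the first-paragraph reduction yields the stated formula.

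The step I expect to be the main obstacle is precisely this fibre count: one must show cleanly that the only targets with a non-trivial fibre are those supported on the full linear part, and that each of these is hit exactly twice. This needs a careful description, at the branch vertex, of which indecomposables carrying vertex $1$ can occur as the top summand $X$, together with the verification that deleting $X$ never drops the support-rank by more than one. For type $\mathbb E$ the same dichotomy holds with $\mathbb A_{n-2}$ replaced by $\mathbb A_{n-3}$ and $c=3$: the branch vertex again has exactly two cloud-neighbours (the starts of the arms of length $1$ and $2$), so the fibre over each $\mathbb A_{n-3}$-tilting module again has two elements, producing the term $a_{n-3}(\mathbb A_{n-3})$; as only finitely many such algebras occur, any residual checking can be completed by hand or by machine.
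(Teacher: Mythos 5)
Your proposal is correct and takes essentially the same route as the paper: the paper proves a single lemma for a quiver of type $\mathbb A_{n-c}$ attached to a cloud whose branch vertex has exactly two cloud-neighbours ($c\ge 2$), defines the same deletion map $\alpha$, and performs exactly your fibre count — $\alpha$ is a bijection except over the tilting modules of the linear $\mathbb A_{n-c}$-part, where the two preimages are $Z\oplus T'$ and $Z'\oplus T'$ with $Z, Z'$ the two thin branch-crossing indecomposables. The only difference is presentational: the paper states the lemma once in the general cloud setting and then specializes to $\mathbb D_n$ ($c=2$) and $\mathbb E_n$ ($c=3$), whereas you work out type $\mathbb D$ and appeal to identical bookkeeping for type $\mathbb E$.
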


Again, we consider a general setting, namely
we consider an artin algebra $\Lambda$ with
the following valued quiver with $n$ vertices and we assume that $c \ge 2$:
$$
\hbox{\beginpicture
	\setcoordinatesystem units <1cm,1cm>
	\multiput{} at 0 1  8 -1 /
	\multiput{$\circ$} at 0 0  1 0  2 0 4 0  5 0  6 0.5  6 -0.5 /
	\put{$\cdots$} at 3 0
	\arr{0.8 0}{0.2 0}
	\arr{1.8 0}{1.2 0}
	\arr{2.5 0}{2.2 0}
	\arr{4.8 0}{4.2 0}
	\plot 3.8 0  3.5 0 /
	\put{$\ssize 1$} at 0 -.3
	\put{$\ssize 2$} at 1 -.3
	\put{$\ssize 3$} at 2 -.3
	\put{$\ssize {n-c-1}$} at 3.9 -.3
	\put{$\ssize {n-c}$} at 4.9 -.3
	\put{$\ssize {n-c+1}$} at 6 .8
	\put{$\ssize {n-c+2}$} at 6 -.8
	\arr{5.8 0.4}{5.2 0.1}
	\arr{5.8 -.4}{5.2 -.1}
	\setdots <.5mm>
	\setquadratic
	\plot 5.4 0  6 .5  7 .5  7.5 0  7 -.5  6 -.5  5.4 0 /
	\endpicture}
$$
On the left, we have a quiver of type $\mathbb A_{n-c}$ with arrows $i \leftarrow i+1$,
and the remaining $c$ vertices are in the dotted
``cloud'' to the right. There are precisely two vertices in the cloud, namely
$n-c+1$ and $n-c+2$ with arrows $n-c \leftarrow n-c+1$ and
$n-c \leftarrow n-c+2$ and there is no other arrows between the cloud
and the $\mathbb A_{n-c}$ quiver. Again, we let $Q'$ denote the valued quiver
obtained by deleting the vertex $1$ and the arrow ending in $1$ and by $\Lambda'$
	the corresponding factor algebra of $\Lambda$ and we show the following:
	    
\begin{lemma}\label{mod-lemma}
$$
 a_{n-c+1}(\Lambda) = a_{n-c+1}(\Lambda') + a_{n-c}(\Lambda) + a_{n-c}(\mathbb A_{n-c}).
 $$
\end{lemma}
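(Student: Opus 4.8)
The plan is to follow the proof of the preceding hook-formula Lemma, isolating the single new effect of the two arrows $n-c\leftarrow n-c+1$ and $n-c\leftarrow n-c+2$ meeting at the branch vertex $n-c$. As before, a support-tilting $\Lambda$-module of support-rank $n-c+1$ either has $1$ outside its support --- and these are exactly the support-tilting $\Lambda'$-modules of that rank, accounting for the term $a_{n-c+1}(\Lambda')$ --- or has $1$ in its support, forming the set $\mathcal S_{n-c+1}(\Lambda;1)$ in the notation of that Lemma. It therefore remains to prove that $|\mathcal S_{n-c+1}(\Lambda;1)|=a_{n-c}(\Lambda)+a_{n-c}(\mathbb A_{n-c})$.

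For this I would reuse the map $\alpha$ of that Lemma. Given $T\in\mathcal S_{n-c+1}(\Lambda;1)$, let $X$ be the indecomposable summand of largest length with $\Hom(P(1),X)\neq 0$. Just as there, $X$ is thin and its support is an interval starting at $1$; the one feature absent when the rank is at most $n-c$ is that this interval may now run through $n-c$ into the cloud, so that $X$ is the thin module on $\{1,\dots,n-c,w\}$ for one cloud neighbour $w\in\{n-c+1,n-c+2\}$ of $n-c$. In either case the extension-obstruction argument of that Lemma (a summand reaching past the far end of $X$ would produce a self-extension of $T$) shows that $X$ has a vertex lying in no other summand, so that $\alpha(T):=T\ominus X$ has support-rank exactly $n-c$ and lies in $\mathcal S_{n-c}(\Lambda)$.

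The heart of the matter is then the fibres of $\alpha\colon\mathcal S_{n-c+1}(\Lambda;1)\to\mathcal S_{n-c}(\Lambda)$. I claim $\alpha$ is onto, that the fibre over $T'$ is a single point whenever some vertex of $\{1,\dots,n-c\}$ is missing from $\Supp(T')$, and that it has exactly two points when $\Supp(T')=\{1,\dots,n-c\}$, that is, when $T'$ is a tilting module of the linearly oriented chain $\mathbb A_{n-c}$. Reconstructing a preimage $T=X\oplus T'$ forces $X$ to reach $1$ and to enlarge $\Supp(T')$ by a single vertex; if a chain vertex is missing, then the added vertex together with the $\Ext$-compatibility of $X$ with the summands of $T'$ pin down $X$ uniquely, exactly as in Cases 1 and 2 of that Lemma. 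If instead the chain is already full, the added vertex must be a cloud neighbour of $n-c$, and attaching either $n-c+1$ or $n-c+2$ produces the projective-injective module $X$ of the enlarged chain $\mathbb A_{n-c+1}$ on $\{1,\dots,n-c,w\}$; since this $X$ is $\Ext$-orthogonal to everything supported on $\{1,\dots,n-c\}$, both $X\oplus T'$ are support-tilting and lie over $T'$. These two attachments are the only source of a non-singleton fibre.

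Summing fibre sizes then gives $|\mathcal S_{n-c+1}(\Lambda;1)|=|\mathcal S_{n-c}(\Lambda)|+a_{n-c}(\mathbb A_{n-c})=a_{n-c}(\Lambda)+a_{n-c}(\mathbb A_{n-c})$, since the doubled fibres are indexed by the $a_{n-c}(\mathbb A_{n-c})$ tilting modules of $\mathbb A_{n-c}$. The step I expect to be most delicate is the uniqueness of the reconstruction off this exceptional locus: one must check, according to whether $1$ already lies in $\Supp(T')$ and which chain and cloud vertices are present, that the extensions forced by the arrows entering $n-c$ leave room for only one admissible $X$ --- this is precisely where the two incoming cloud arrows at the branch must be controlled, and where the extra summand $a_{n-c}(\mathbb A_{n-c})$ is seen to arise, exactly once, from the full-chain case.
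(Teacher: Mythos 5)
Your proposal matches the paper's own proof essentially step for step: the same split according to whether vertex $1$ lies in the support, the same deletion map $\alpha$ sending $T=X\oplus T'$ to $T'$ where $X$ is the longest summand with $\Hom(P(1),X)\neq 0$, and the same identification of the non-singleton fibres as the pairs $Z\oplus T'$, $Z'\oplus T'$ (with $Z$, $Z'$ the thin modules reaching into the cloud through $n-c+1$ resp.\ $n-c+2$) lying over the $a_{n-c}(\mathbb A_{n-c})$ tilting modules of the full chain. The paper likewise defers the routine verification of surjectivity and of the singleton fibres to Cases 1 and 2 of the hook-formula lemma, so your treatment is at the same level of detail and is correct.
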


\begin{proof} 
The proof follows closely the proof of Proposition \ref{hook-formula}. 
The support-tilting modules $T$ for $\Lambda$ with $1$ not in the support are just the
support-tilting modules for $\Lambda'$. We construct a surjection
$\alpha$ from the set $\mathcal  S_{n-c+1}(\Lambda;1)$ of the
support-tilting $\Lambda$-modules $T$ with support-rank $n-c+1$ and $\Hom(P(1),T) \neq 0$
onto the set $\mathcal  S_{n-c}(\Lambda)$ of
support-tilting $\Lambda$-modules $T$ with support-rank $n-c$.
In the present setting, $\alpha$ will not be injective, but will be a double cover:
pairs
in  $\mathcal  S(\Lambda;1)$ are identified by $\alpha$; the number of such pairs
will be just $a_{n-c}(\mathbb A_{n-c})$.

As above, one shows that any module $T$ in $\mathcal  S_{n-c+1}(\Lambda;1)$ is of the form 
$T = X\oplus T'$
where $X$ is indecomposable, $\Hom(P(1),X)\neq 0$ and $X$ is of maximal possible length. Note that
the support of $X$ is contained either in $\{1,2,\dots,n-c+1\}$ or in
$\{1,2,\dots,n-c,n-c+2\}$. In particular, $X$ is uniquely determined (since the support
of $T$ cannot contain all the vertices $1,2,\dots,n-c+2$).
As above, the mapping $\alpha$ will be the deletion of the summand $X$.

Let $Z$ be the indecomposable module with
support $\{1,2,\dots,n-c+1\}$ and $Z'$ the indecomposable module with
support $\{1,2,\dots,n-c,n-c+2\}$. Starting with a tilting module $T'$ for
$\mathbb A_{n-c}$, we may form the direct sums $Z\oplus T'$ and $Z'\oplus T'$.
Then these are elements of $\mathcal  S_{n-c+1}(\Lambda;1)$,  both of which are mapped
under $\alpha$ to the same module $T'$. These are the $a_{n-c}(\mathbb A_{n-c})$
pairs of elements of $\mathcal  S(\Lambda;1)$ which are identified by $\alpha$.

It follows that $\mathcal  S(\Lambda;1)$ has cardinality
$a_{n-c}(\Lambda) + a_{n-c}(\mathbb A_{n-c})$.
\end{proof}

\begin{proof}[Proof of Proposition \ref{modified}] The two assertions of 
Proposition \ref{modified} are special cases of Lemma \ref{mod-lemma}.
For the first assertion, $\Lambda$ is of type $\mathbb D_n$, $\Lambda'$ of type
$\mathbb D_{n-1}$, and $c = 2$. 
For the second assertion, $\Lambda$ is of type $\mathbb E_n$, $\Lambda'$ of type
$\mathbb E_{n-1}$ and $c = 3$. 
\end{proof}

\begin{remark} For another proof of the modified hook formula, see Hubery \cite{[H]}.
\end{remark}

\begin{corollary}\label{hook-cor}
$$
 a_{n-1}(\mathbb D_n) = \left[
 \begin{matrix} 2n-3\cr n-1\end{matrix} \right].
$$
 \end{corollary}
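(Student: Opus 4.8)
The plan is to obtain Corollary \ref{hook-cor} as the subdiagonal case of the modified hook formula, substituting the values that are already at hand and reducing everything to a single identity in the Bailey notation. First I would invoke the first assertion of Proposition \ref{modified},
$$a_{n-1}(\mathbb D_n) = a_{n-1}(\mathbb D_{n-1}) + a_{n-2}(\mathbb D_n) + a_{n-2}(\mathbb A_{n-2}),$$
and put each summand into closed form. The term $a_{n-1}(\mathbb D_{n-1})$ is the number of tilting modules for $\mathbb D_{n-1}$, so it equals $\left[{\smallmatrix 2n-4\cr n-3\endsmallmatrix}\right]$ by the determination of $a_n(\mathbb D_n)$ in \cite{[BLR]}. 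The term $a_{n-2}(\mathbb A_{n-2})$ is the Catalan number $\frac{1}{n-1}\binom{2n-4}{n-2}$. The term $a_{n-2}(\mathbb D_n)$ is a lower-diagonal entry, since $s=n-2<n$, so it lies in the range $s\le n-2$ of the ordinary hook formula (Proposition \ref{hook-formula}); granting the closed form there, it equals $\left[{\smallmatrix 2n-4\cr n-2\endsmallmatrix}\right]$.

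With these substitutions the statement becomes the purely arithmetical identity
$$\left[{\smallmatrix 2n-4\cr n-3\endsmallmatrix}\right] + \left[{\smallmatrix 2n-4\cr n-2\endsmallmatrix}\right] + \frac{1}{n-1}\binom{2n-4}{n-2} = \left[{\smallmatrix 2n-3\cr n-1\endsmallmatrix}\right].$$
Using $\left[{\smallmatrix t\cr s\endsmallmatrix}\right] = \frac{s+t}{t}\binom{t}{s}$ together with the elementary ratios $\binom{2n-4}{n-3} = \frac{n-2}{n-1}\binom{2n-4}{n-2}$ and $\binom{2n-3}{n-1} = \frac{2n-3}{n-1}\binom{2n-4}{n-2}$, I would express both sides as a rational multiple of $\binom{2n-4}{n-2}$; after cancelling $2n-4 = 2(n-2)$, each side collapses to the same factor $\frac{3n-4}{n-1}$ times $\binom{2n-4}{n-2}$. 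This is a routine computation of exactly the flavour carried out in the proof of Proposition \ref{comparision}, and I expect no difficulty there.

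The one genuine subtlety is the inductive bookkeeping, and this is the step I would treat as the main obstacle. The closed form used for $a_{n-2}(\mathbb D_n)$ is itself produced by the ordinary hook formula, and running that for row $n$ up to $s=n-2$ requires the row-$(n-1)$ values up to its own subdiagonal $a_{n-2}(\mathbb D_{n-1})$, which is precisely the present corollary for $n-1$ and is not covered by the ordinary hook formula. I would therefore organize the whole argument as a single induction on $n$: within the step for $n$, the ordinary hook formula first fills the entries $a_s(\mathbb D_n) = \left[{\smallmatrix n+s-2\cr s\endsmallmatrix}\right]$ for $s \le n-2$ (these match because the Bailey numbers obey the Lucas recursion $\left[{\smallmatrix t\cr s\endsmallmatrix}\right] = \left[{\smallmatrix t-1\cr s-1\endsmallmatrix}\right] + \left[{\smallmatrix t-1\cr s\endsmallmatrix}\right]$, which is exactly the hook relation), and the modified hook formula then delivers the subdiagonal $s=n-1$ via the identity above. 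With the main-diagonal input from \cite{[BLR]} and the immediate base cases $\mathbb D_2,\mathbb D_3$, the displayed binomial identity constitutes the entire content of the inductive step for the subdiagonal.
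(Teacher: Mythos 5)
Your proposal is correct and takes essentially the same route as the paper: both start from the first assertion of Proposition \ref{modified}, substitute the closed forms $a_{n-1}(\mathbb D_{n-1}) = \left[\smallmatrix 2n-4\cr n-3\endsmallmatrix\right]$, $a_{n-2}(\mathbb D_n) = \left[\smallmatrix 2n-4\cr n-2\endsmallmatrix\right]$, and $a_{n-2}(\mathbb A_{n-2}) = \frac{1}{n-1}\binom{2n-4}{n-2}$, and then verify the resulting binomial identity (the paper normalizes all terms by $\binom{2n-3}{n-1}$ where you normalize by $\binom{2n-4}{n-2}$, an immaterial difference). Your explicit handling of the inductive bookkeeping, where the hook formula for $a_{n-2}(\mathbb D_n)$ presupposes the corollary for $n-1$, is sound and in fact spells out a structure the paper leaves implicit in its outline.
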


\begin{proof}  We start with the previous observation
\begin{eqnarray*}
 a_{n-1}(\mathbb D_n) 
  &=&a_{n-1}(\mathbb D_{n-1}) + a_{n-2}(\mathbb D_n) + a_{n-2}(\mathbb A_{n-2}) \\
    &=& \frac{3n-7}{2n-4}\binom{2n-4}{n-3}
      +\frac{3n-6}{2n-4}\binom{2n-4}{n-2}
        +\frac{1}{n-1}\binom{2n-4}{n-2}.
\end{eqnarray*}
Write
\begin{eqnarray*}
 \binom{2n-4}{n-3} &=& \frac{n-2}{2n-3}\binom{2n-3}{n-1}, \\
  \binom{2n-4}{n-2} &=& \frac{n-1}{2n-3}\binom{2n-3}{n-1}.
\end{eqnarray*}
One easily shows that
$$
  \frac{3n-7}{2n-4}\cdot \frac{n-2}{2n-3} + \frac{3n-6}{2n-4}\cdot\frac{n-1}{2n-3} +
   \frac1{n-1}\cdot\frac{n-1}{2n-3}\ =\ \frac{3n-4}{2n-3}.
$$
   As a consequence, we get
\begin{eqnarray*}
 a_{n-1}(\mathbb D_n) 
     &=&\frac{3n-4}{2n-3}\binom{2n-3}{n-1} \\
     &=& \left[\begin{matrix} 2n-3\cr n-1\end{matrix} \right].
\end{eqnarray*}
\end{proof}

\section{Summation formulas}\label{summation}

An immediate consequence of the previous section is the following assertion:

\begin{proposition} Let $\Delta = \mathbb A$, or $\mathbb B$ and $n\ge 0$, or $\Delta =
\mathbb D$ and $n\ge 2$. If $1\le s \le n-1$, then
$$
  \sum\nolimits_{i=0}^s a_i(\Delta_n) = a_s(\Delta_{n+1}).
$$
\end{proposition}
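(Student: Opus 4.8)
The plan is to derive the identity by telescoping the hook formula of Proposition~\ref{hook-formula}. Rewriting that formula with $n+1$ in place of $n$, it reads
$$
  a_j(\Delta_{n+1}) = a_j(\Delta_n) + a_{j-1}(\Delta_{n+1}),
$$
valid whenever $1\le j \le n+1-c$, where $c=0,1,2$ for $\Delta=\mathbb A,\mathbb B,\mathbb D$ respectively. First I would apply this with $j=s$, then substitute the analogous expression for $a_{s-1}(\Delta_{n+1})$, and continue peeling off one index at a time. After $s$ steps this yields
$$
  a_s(\Delta_{n+1}) = \sum\nolimits_{i=1}^{s} a_i(\Delta_n) + a_0(\Delta_{n+1}).
$$

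The final ingredient is the boundary value: since the only module of support-rank $0$ is the zero module, one has $a_0(\Delta_n)=a_0(\Delta_{n+1})=1$. Replacing $a_0(\Delta_{n+1})$ by $a_0(\Delta_n)$ absorbs the last term into the sum and gives $a_s(\Delta_{n+1})=\sum_{i=0}^{s}a_i(\Delta_n)$, as claimed. A cleaner bookkeeping that avoids the ellipsis is to argue by induction on $s$: the case $s=1$ is a single instance of the hook formula together with $a_0(\Delta_n)=a_0(\Delta_{n+1})$, and the inductive step combines $a_s(\Delta_{n+1})=a_s(\Delta_n)+a_{s-1}(\Delta_{n+1})$ with the induction hypothesis $a_{s-1}(\Delta_{n+1})=\sum_{i=0}^{s-1}a_i(\Delta_n)$.

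The one point requiring care---and the main, though routine, obstacle---is checking that every invocation of the hook formula stays inside its permitted range. In the telescoping we use the formula at indices $j=s,s-1,\dots,1$, so we need $j\le n+1-c$ for each such $j$; the binding constraint occurs at $j=s$, namely $s\le n+1-c$. For $\mathbb A$ ($c=0$) this asks $s\le n+1$, for $\mathbb B$ ($c=1$) it asks $s\le n$, and for $\mathbb D$ ($c=2$) it asks $s\le n-1$. Thus the single uniform hypothesis $1\le s\le n-1$ of the proposition is precisely what legitimizes all three cases simultaneously; moreover it forces $n\ge 2$, which covers the lower bounds $n\ge m$ needed for Proposition~\ref{hook-formula} to apply to $\Delta_{n+1}$ in each type. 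Once this range check is in place, no further estimates are required and the computation is immediate.
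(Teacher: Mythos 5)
Your proof is correct and is essentially the paper's own argument: the paper proves the identity by induction on $s$ (base case $s=0$, where both sides equal $1$, and the hook formula $a_s(\Delta_{n+1}) = a_s(\Delta_n) + a_{s-1}(\Delta_{n+1})$ supplying the inductive step), which is precisely the induction you describe, your telescoping being the same argument unrolled. Your explicit verification that every invocation stays within the hook formula's range $1\le j\le n+1-c$, $n+1\ge m$ --- with the binding case being type $\mathbb D$ at $j=s$ --- is a careful touch that the paper leaves implicit.
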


\begin{proof} We use induction. For $s = 0$ both sides are equal to $1$.
For $s \ge 1$ we have
\begin{eqnarray*}
 \sum\nolimits_{i=0}^s a_i(\Delta_n)
    &=& a_s(\Delta_n) + \sum\nolimits_{i=0}^{s-1} a_i(\Delta_n) \\
    &=& a_s(\Delta_n) + a_{s-1}(\Delta_{n+1}) \\
    &=& a_s(\Delta_{n+1}),
\end{eqnarray*} 
the last equality being the hook formula.
\end{proof}
   
\begin{corollary}Let $\Delta = \mathbb A, \mathbb B$ or $\mathbb D$. Then
$$
        a(\Delta_n) = a_n(\Delta_n)+a_{n-1}(\Delta_{n+1})
$$
\end{corollary}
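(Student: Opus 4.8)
The plan is to reduce the statement directly to the summation proposition that immediately precedes it, since the corollary is essentially that proposition read at the extreme index $s = n-1$ together with the definition of $a(\Delta_n)$. Recall that by definition $a(\Delta_n) = \sum_{s=0}^n a_s(\Delta_n)$, so the entire content is to identify the truncated sum $\sum_{i=0}^{n-1} a_i(\Delta_n)$ with a single entry of the triangle.

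First I would peel off the top diagonal term, writing
$$
 a(\Delta_n) = \sum\nolimits_{i=0}^{n} a_i(\Delta_n) = a_n(\Delta_n) + \sum\nolimits_{i=0}^{n-1} a_i(\Delta_n).
$$
Then I would apply the preceding proposition, namely $\sum_{i=0}^{s} a_i(\Delta_n) = a_s(\Delta_{n+1})$ valid for $1 \le s \le n-1$, in the boundary case $s = n-1$. This gives $\sum_{i=0}^{n-1} a_i(\Delta_n) = a_{n-1}(\Delta_{n+1})$, and substituting yields exactly
$$
 a(\Delta_n) = a_n(\Delta_n) + a_{n-1}(\Delta_{n+1}),
$$
which is the asserted identity. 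No representation-theoretic input is needed beyond what the proposition already encodes; the proof is pure bookkeeping of the summation index.

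The only point requiring a little care is the admissible range: the proposition demands $1 \le s \le n-1$, so taking $s = n-1$ presupposes $n \ge 2$ (and the stated hypotheses on $n$ for each type). The genuinely small cases must therefore be checked separately, but these can be read off directly from the triangles of Section~\ref{triangles}, where one verifies that the sum of a full row equals the appropriate sub-diagonal entry of the next row. I do not anticipate a real obstacle here: the substance of the argument is entirely carried by the hook formula through the previous proposition, and the present corollary is simply its specialization to the last non-trivial value of $s$, recombined with the diagonal term $a_n(\Delta_n)$ that the proposition deliberately omits.
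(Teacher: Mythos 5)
Your proposal is correct and is exactly the argument the paper intends: the corollary is stated without a separate proof precisely because it follows from the preceding proposition by splitting off the diagonal term, $a(\Delta_n)=a_n(\Delta_n)+\sum_{i=0}^{n-1}a_i(\Delta_n)$, and evaluating the proposition at $s=n-1$. Your additional remark about the boundary case $n\ge 2$ (with the small cases read off the triangles) is a sensible precaution but does not change the substance.
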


\noindent
{\bf Case $\mathbb A_n$}
$$
 a(\mathbb A_n) =  \frac 1{n+1}\binom{2n}{n}+
       \frac 3{n+2}\binom{2n}{n-1}
           =   \frac 1{n+2}\binom{2n+2}{n+1}.
$$

\noindent
{\bf Case $\mathbb B_n$}
$$
 a(\mathbb B_n) = \binom{2n-2}{n-1} + \binom{2n-1}n = \binom{2n}n.
$$

\noindent
{\bf Case $\mathbb D_n$}
$$
 a(\mathbb D_n) = \left[\begin{matrix} 2n-2\cr n-2 \end{matrix}\right] +
                   \left[\begin{matrix} 2n-2\cr n-1 \end{matrix}\right]
		                = \left[\begin{matrix} 2n-1\cr n-1 \end{matrix}\right].
$$

\section
    {\bf Acknowledgment} The authors are indebted to Henning Krause and
Dieter Vossieck for providing the references \cite{[GP]} and \cite{[BLR]}, and to
Lutz Hille for helpful discussions concerning the problem of determining the number of tilting modules.
They thank the referee for pointing out mistakes in the proof of Corollary \ref{hook-cor}.
Andrew Hubery has to be praised for his careful reading of the manuscript. His detailed comments have improved the presentation considerably. 

This work is funded by the Deanship of Scientific Research,
King Abdulaziz University, under grant No. 2-130/1434/HiCi.
The authors, therefore, acknowledge technical and financial support of KAU.

\bigskip
\hrule
\bigskip

\noindent 2010 {\it Mathematics Subject Classification}:
Primary:
       05E10. 
				Secondary:
        16G20, 
	16G60,
	05A19, 
	16D90, 
	16G70.

\noindent \emph{Keywords: } 
Dynkin algebra, Dynkin diagram, tilting module, support-tilting module,
     lattice of non-crossing partitions, cluster combinatorics,
generalized Catalan number,
     Catalan triangle, Pascal triangle, Lucas triangle,
          categorification. 

\bigskip
\hrule
\bigskip

\noindent (Concerned with sequences
\seqnum{A009766},
\seqnum{A007318},
\seqnum{A008315}, 
\seqnum{A029635},
\seqnum{A059481},
\seqnum{A129869},
\seqnum{A241188}.)

\end{document}